\newtheorem{thm}{Theorem} \newtheorem{lem}{Lemma}
\newtheorem{df}{Definition}
\DeclareMathOperator{\dist}{dist} \DeclareMathOperator{\diam}{diam}
\def \ah {{$\alpha$--harmonic}}
\newcommand{\appc}[1]{\stackrel{#1}{\approx}}
\def \EE {\mathbb{E}} \def \PP {\mathbb{P}} \def \RR {\mathbb{R}}  \def \Rd {{\RR^d}}      \def \pK {\mathcal{K}} 
 \def \eps {\varepsilon}
      \def \CI {C_1}  \def \CIII
\def \CIV {c_1}
\def \kk {G}
\def \tp{\tilde{p}}
\def \tEE {\tilde{\mathbb{E}}} 
\def \tPP {\tilde{\mathbb{P}}}
\def \tx {\tilde{x}}
\def \tu {\tilde{u}}
\def \ty {\tilde{y}}
\def \tQ {\tilde{Q}}
\def \tG {\tilde G}
\def \GD {G}
\def \GDD {G}
\def \tGD {{\tilde G}}
\def \tGDD {{\tilde G}}
\def \tP {\tilde P}
\def \Ab {\,b\nabla\,}
\def \deltaD {\delta}
\def\deltaDD {\delta}
\def \mk {\kappa}
\def \mgk {\hat{\kappa}}
\def \nn {{\sf n}}
\title{Estimates of the Green function for the fractional Laplacian
  perturbed by gradient\footnote{The research was partially supported by MNiSW. 
2000 Mathematics Subject Classification: 47A55, 60J35, 60J50, 60J75, 47G20.
Key words and phrases:  fractional Laplacian, gradient perturbation, Green function, smooth domain, Kato condition.} } \author{Krzysztof Bogdan, Tomasz
  Jakubowski\footnote{Institute of Mathematics and Computer Science, Wroc{\l}aw University of Technology, Wybrze\.ze Wyspia\'nskiego 27, 50-370 Wroc{\l}aw, Poland,
e-mail: Krzysztof.Bogdan@pwr.wroc.pl, Tomasz.Jakubowski@pwr.wroc.pl}
} \date{\today}
\begin{document}
\maketitle

\begin{abstract}
The Green function of the fractional Laplacian of the differential order bigger than one and the Green function of its gradient perturbations are comparable for bounded smooth multidimensional open sets
if the drift function is in an appropriate Kato class.
\end{abstract}

\section{Introduction}
Perturbations of the Laplace operator $\Delta$ by the first order or gradient
operators $b(x)\cdot \nabla$ were studied by Cranston and
Zhao in \cite{CranZhao}.
They proved for Lipschitz domains that the Green function and the harmonic measure 
 of $\Delta+b(x)\cdot \nabla$
are comparable with those of $\Delta$ under an appropriate Kato condition on the
drift function $b$.
Zhang then showed in \cite{Zhang1} and \cite{Zhang2} 
that the transition density of $\Delta+b\cdot \nabla$
 has Gaussian bounds. 
The results were extended
to more general second order elliptic operators by Liskevich and Zhang (\cite{LZ}),
and to drift {measures} satisfying the Kato condition by Kim and Song (\cite{MR2247841}).

The fractional Laplacian $\Delta^{\alpha/2}$, $0<\alpha<2$, is a primary example of a
non-local generator of a Markovian semigroup.
Perturbations of $\Delta^{\alpha/2}$ 
received much attention recently. 
In particular Schr\"odinger perturbations of $\Delta^{\alpha/2}$ 
were studied by Chen and Song (\cite{MR1473631}, \cite{MR1920109}),
Bogdan and Byczkowski (\cite{MR1671973}, \cite{MR1825645}),  Bogdan, Hansen and Jakubowski (\cite{MR2457489})
 and 
Bogdan, Burdzy and Chen (\cite{MR2006232}). Non-local Schr\"odinger-type
perturbations were considered by Kim and Lee
in \cite{MR2316878}, following earlier papers of Song (\cite{MR1213197}, \cite{MR1353551}). Gradient perturbations of $\Delta^{1/2}$
were studied by Caffarelli  and Vasseur (\cite{MR2680400}) and Kiselev, Nazarov, Volberg (\cite{MR2276260}).
Gradient perturbations of $\Delta^{\alpha/2}$ for $\alpha>1$
were considered by Bogdan and Jakubowski (\cite{MR2283957}) and Jakubowski and Szczypkowski (\cite{2009-TJ-KS-jee}), with focus on sharp estimates of the corresponding transition densities on the whole of $\Rd$. 
In the present paper we estimate the Green function for smooth bounded  subsets of $\Rd$.

Following \cite{MR2283957} we let $\alpha\in (1,2)$.  
We will consider dimensions $d\in\{2,3,\ldots\}$, a nonempty bounded open $C^{1,1}$ set $D\subset \Rd$,
its Green function $G_D$ for $\Delta^{\alpha/2}$, and the Green function $\tilde G_D$
of the operator
$$L = \Delta^{\alpha/2} + b(x)\cdot\nabla\,,$$
where $b$
is a function in Kato class
$\pK_d^{\alpha-1}$ (for details see Section~\ref{sec:p}). 
Our interest in $L$ is motivated by the development of the classical theory of the Laplacian, non-symmetry of $L$ (we have $L^*=\Delta^{\alpha/2} -b(x)\cdot\nabla-{\rm div}\, b$), the fact that  the drift is quite a problematic addition to a jump type process, and by a handful of techniques which already exist for $\Delta^{\alpha/2}$. 

The following estimate, aforementioned in the Abstract,
is an extension to $\Delta^{\alpha/2}$ 
of the results of Cranston and Zhao \cite{CranZhao}.
\begin{thm}\label{Theorem1}
Let $d\geq 2$, $1<\alpha<2$, $b\in \pK_d^{\alpha-1}$, and  let $D\subset \Rd$
be bounded and $C^{1,1}$.
There exists a constant $\CIII =
  \CIII(\alpha,b,D)$ such that for $x,y \in
D$,
  \begin{equation}
    \label{eq:egf}
\CIII^{-1}G_D(x,y) \le \tilde G_D(x,y) \le \CIII G_D(x,y)\,.
  \end{equation}
\end{thm}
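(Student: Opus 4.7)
The plan is to represent $\tilde G_D$ by a Duhamel-type perturbation series around $G_D$ and to verify both bounds in \eqref{eq:egf} term-by-term. Iterating the resolvent identity $\tilde G_D = G_D + G_D(b\cdot\nabla)\tilde G_D$ yields
\begin{equation*}
\tilde G_D(x,y)=\sum_{n=0}^\infty G_n(x,y),\qquad G_0=G_D,\qquad G_{n+1}(x,y)=\int_D G_D(x,z)\,b(z)\cdot\nabla_z G_n(z,y)\,dz,
\end{equation*}
and the desired comparison reduces to showing that this series converges absolutely with tail $\sum_{n\geq 1}|G_n|\leq \theta\,G_D$ for some $\theta<1$.

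The analytical ingredients I would use are the sharp two-sided estimates of $G_D$ on $C^{1,1}$ sets, matching gradient estimates of the form $|\nabla_z G_D(z,y)|\lesssim G_D(z,y)/(\delta(z)\wedge|z-y|)$ (available because $\alpha>1$), and the classical 3G inequality for $G_D$. These combine to give a \emph{gradient 3G} estimate of the schematic form
\begin{equation*}
\frac{G_D(x,z)\,|\nabla_z G_D(z,y)|}{G_D(x,y)}\lesssim \frac{1}{|x-z|^{d-\alpha+1}}+\frac{1}{|z-y|^{d-\alpha+1}},
\end{equation*}
after the relevant boundary-decay factors are absorbed. Integrating against $|b(z)|\,dz$ and invoking the definition of $\pK_d^{\alpha-1}$ makes this integral finite uniformly in $x,y\in D$, and arbitrarily small once the integration is localized to $\set{z\in D:|x-z|+|z-y|<r}$ with $r$ small.

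The convergence of the series is then handled in two layers. If the Kato characteristic of $b$ is small, the geometric summation closes immediately and produces both bounds in \eqref{eq:egf}. For a general $b\in\pK_d^{\alpha-1}$ I would decompose $b=b_1+b_2$ with $b_1$ of small Kato norm and $b_2$ bounded with compact support in $D$, treat $b_1$ by the small-norm argument, and absorb $b_2$ by a Harnack-chain or subdomain-localization argument on scales where the Kato norm of the drift is small.

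The principal obstacle I anticipate is establishing the gradient 3G estimate uniformly up to $\partial D$: $\nabla_z G_D(z,y)$ and $G_D(x,z)$ have boundary decay of different orders, and their ratio with $G_D(x,y)$ must still be controlled by a genuine $\pK_d^{\alpha-1}$-integrand. A secondary but serious difficulty is the lower bound in \eqref{eq:egf}: because $b\cdot\nabla$ is a first-order perturbation the individual $G_n$ are signed, so one cannot simply discard the negative contributions, and the inequality $\tilde G_D\geq \CIII^{-1}G_D$ follows only after showing that the absolute tail $\sum_{n\geq 1}|G_n|/G_D$ is strictly less than one on all of $D\times D$.
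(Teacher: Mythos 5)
Your identification of the perturbation series, the gradient estimate $|\nabla_z G_D(z,y)|\lesssim G_D(z,y)/(\delta(z)\wedge|z-y|)$, the 3G inequality, and the role of the Kato class is correct, and for sets of small diameter your argument essentially coincides with the paper's Lemma~\ref{Theorem1s}: there the majorant $\int_D|b(z)|\,G(x,z)G(z,y)/[G(x,y)(\delta(z)\wedge|y-z|)]\,dz$ can be made arbitrarily small, so $\sum_{n\ge1}|G_n|\le\theta G_D$ with $\theta<1$ and both bounds follow. The gap is in how you pass to a general bounded $C^{1,1}$ set: the 3G-type quantity $\mk(x,y)$ in (\ref{3GIntegral}) is only \emph{bounded}, not small, when $\diam(D)$ is of order one, so the geometric bound $|G_n|\le(C_1d)^nG_D$ carries no decay and the claim $\sum_{n\ge1}|G_n|/G_D<1$ simply fails as an estimate (the signed series may still converge, but your tail bound does not).

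Your proposed repair — split $b=b_1+b_2$ with $b_1$ of small Kato norm and $b_2$ bounded — does not close this gap. A bounded $b_2$ supported in a set of diameter $\approx 1$ produces a 3G integral that is again bounded but not small, so "absorbing $b_2$" is exactly the original problem in disguise, and there is no smaller-norm operator to perturb around without already having the conclusion. There is also a structural issue with your starting identity $\tG_D=G_D+G_D(b\cdot\nabla)\tG_D$: its remainder after iteration involves $\nabla\tG_D$, for which no a priori bound is available; the paper uses the opposite identity $\tG_D=G_D+\tG_D(b\cdot\nabla)G_D$ precisely so that only $\nabla G_D$ appears.

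What the paper actually does, and what your proposal is missing, is a bootstrap from the local statement. From the small-set comparability one derives Harnack and boundary Harnack inequalities for $L$ (Lemmas~\ref{HIforL} and~\ref{BHPforL}), and these, together with Harnack chains and the localizing domains $F(Q,r)$ of Lemma~\ref{l:loc}, yield the \emph{lower} bound $\tG_D\ge cG_D$ on all of $D$. For the \emph{upper} bound the series is abandoned entirely: the key extra input is the rough a priori estimate $\tG_D(x,y)\le C_0|x-y|^{\alpha-d}$ of Lemma~\ref{lem:GLDupper}, which comes from the whole-space heat-kernel comparability of \cite{MR2283957} plus exponential decay of $\tPP^x(\tau_D>t)$, and which holds with no smallness assumption. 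One then iterates the perturbation formula (\ref{eq:0}) directly, splitting $D$ into a thin collar $D^r$ (where the Kato integral is small) and the bulk $D\setminus D^r$ (where comparability is already known from the Harnack argument); each pass improves the bound, giving (\ref{eq:AB}) and hence $\tG_D\le\frac{A}{1-\eta}G_D$. Without Lemma~\ref{lem:GLDupper} this iteration has nowhere to start, and that lemma is the ingredient your plan lacks.
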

Sharp explicit estimates of $G_D$, hence of $\tilde G_D$, exist, see (\ref{GreenEstimates}), and sharp explicit estimates of the corresponding Poisson kernel are given in (\ref{eq:ePk}) below.
Theorem~\ref{Theorem1} is based on the perturbation formula for the Green operators,
$$
  \tG_D = G_D + \tG_D\Ab G_D\,,
$$
where $\Ab \varphi(x) = b(x) \cdot \nabla \varphi(x)$. 
Iterating 
yields {formal} perturbation series, 
$$
  \tG_D = \sum_{n=0}^\infty G_D(\Ab G_D)^n\,.
$$
The structure of the proof of Theorem~\ref{Theorem1} is now as follows.
Section~\ref{sec:p} provides details on the $C^{1,1}$ condition and on transition
densities, Green kernels and harmonic functions of the underlying
Markov processes.
In Section~\ref{s:3} we prove the perturbation formula 
and in Section~\ref{chap:Green} we prove that
the perturbation series 
indeed converge and yield (\ref{eq:egf}) for {small} sets $D$ with bounded {distortion}.  
In the proofs we use estimates for $G_D$ 
(\cite{MR1490808}, \cite{MR1654824}, \cite{MR1991120}) and for the gradient of $G_D$
(\cite{BKN}), 
the boundary Harnack inequality for $\Delta^{\alpha/2}$ (\cite{MR2365478}, \cite{MR1438304}) and the Kato condition (\ref{eq:Kc}) for the drift function $b$.
As a result in Section~\ref{chap:Green} we obtain the Harnack and boundary
Harnack inequalities for nonnegative harmonic functions of $L$ 
in large open sets. 
These are then used in Section~\ref{sec:b}
along with  the perturbation formula and a rough upper bound for $\tG_D$ given in
Lemma~\ref{lem:GLDupper}, to
prove Theorem~\ref{Theorem1} for arbitrary bounded $C^{1,1}$ open sets. 
A number of other auxiliary results are proved in the Appendix.

Concerning the statement of Theorem~\ref{Theorem1}, we note that if the diameter of $D$ is smaller than $r$ and the distortion of $D$ is smaller than $\lambda$, then the constant $C$ in (\ref{eq:egf}) depends only on $d$, $\alpha$, $r$, $\lambda$ and the suprema in the definition of the Kato class $\pK^{\alpha}_d$ (see below).

We observe that an approach similar to ours was recently used
for gradient perturbations of elliptic operators on small sets in \cite{MR2140204} (see also \cite{Zhang1}).
In a wider perspective, Theorem~\ref{Theorem1} is an analogue of the Conditional Gauge Theorem (CGT) in the theory of  Schr\"odinger perturbations, see \cite{MR1671973}, \cite{MR1825645},
\cite{MR1473631}, \cite{MR1920109}, \cite{MR1329992} and \cite{H}.
We should remark here that the distributions of the Markov processes
generated by $\Delta^{\alpha/2}$ and $L$ are {\it not}\/ mutually absolutely continuous locally in time even for (nonzero) constant drift $b$, and any $\alpha\in (0,2)$, see \cite[Theorem 33.1]{MR1739520}. Therefore techniques based on the Girsanov theorem (\cite{CranZhao}) seem unavailable, and we need to proceed via analytic estimates of kernel functions.
Apparently an adaptation of our arguments could be used to give a short analytic proof of CGT (compare \cite{MR1329992}, \cite{MR1671973}, \cite{MR1473631}),  in fact a proof much simpler than that of Theorem~\ref{Theorem1}.
Noteworthy, Green function estimates for Schr\"odinger perturbations 
hold {\it conditionally}\/ under global assumptions of finiteness, e.g.\ gaugeability, existence of (finite) superharmonic functions bounded from below
or smallness of the spectral radius.
Lemma~\ref{lem:GLDupper}, a consequence of the estimates of the
transition densities in \cite{MR2283957}, overrides such assumptions here.
Heuristically, adding drift $b(X_t)dt$ to a stochastic process will not increase its mass on $\Rd$.
This contrasts with a possibly exponential growth of the mass of Feynman-Kac semigroups generated by Schr\"odinger operators.
The drift may, however, change the mass of the process killed off $D$ by trying to push it away from the fatal $D^c$.
This is why  (\ref{eq:egf}) is nontrivial phenomenologically.
Also the symmetry of the semigroup and Green function are lost in the presence of the drift, causing certain technical problems. In this connection we note that $\tilde G_D(y,x)$ may be considered the Green function of $L^*$, and this operator has non-zero Schr\"odinger  part, namely $-{\rm div}\,b$.
Our results apply in particular to 
the Ornstein-Uhlenbeck operator $\Delta^{\alpha/2}+k x \cdot
\nabla$ (for dimensions $d\geq 2$ and $1<\alpha<2$). Here $k$ is a constant. 
We refer to \cite{MR2353039} and \cite{MR2369047}
for estimates of superharmonic functions of this important operator.
We  note that for $k<0$ the drift function $b(x)=k x$ will generally increase the occupation time density (i.e.\ the Green function) 
for sets $D$ containing the origin.

The proof of (\ref{eq:egf}) turned out to be quite difficult to handle, in terms of both the  preliminaries and the auxiliary estimates of the Green function.
Therefore we focused our attention on the more explicit $C^{1,1}$ open sets rather than Lipschitz open sets.  
We hope that our approach may now be adapted in the Lipschitz case.
Here the sensitive elements are Lemma~\ref{lem:UnifInt} and (\ref{eq:alphag1}).

A few additional comments on possible extensions of the results are due.
If $d=1<\alpha$, then 
then the right hand side of (\ref{eq:GradEstimGreen}) below will no longer be integrable.  This explains our restriction to $d\ge 2$.
We however conjecture that Theorem~\ref{Theorem1} does extend to $d=1$. This case is interesting even for the sake of the one-dimensional Ornstein-Uhlenbeck process.
One may wonder if  (\ref{eq:egf}) holds for $\alpha=1$,
but we certainly know that (\ref{eq:egf})  fails for $\alpha\in (0,1)$. Indeed, if $0<\alpha<1$ then the expected exit times from balls, to wit, $\int_{B(x_0,r)} G_{B(x_0,r)}(x,y)dy$, are generally incomparable for $\Delta^{\alpha/2}$ and the Ornstein-Uhlenbeck operator $\Delta^{\alpha/2}+k x \cdot\nabla$ (see \cite{MR2353039}), so the Green functions are not comparable either. Heuristically, a (first-order) gradient perturbation is infinitesimally small with respect to $\Delta^{\alpha/2}$ only if $\alpha>1$. This explains the restriction $1<\alpha<2$ in \cite{MR2283957}, \cite{2009-TJ-KS-jee} and the present paper.
We remark that the existence of ratios and Martin representation of nonnegative harmonic functions of $L$ may likely be obtained with the results and toolbox presented in this paper and  \cite{MR2365478}.
We also note that a similar approach should apply to 
{additive} perturbations of $\Delta^{\alpha/2}$ by non-local L\'evy-type operators
(compare \cite{MR2417435}),
provided
(\ref{eq:GradEstimGreen})  can be generalized.  It also seems possible and interesting to study drift perturbations of more general semigroups subordinated to the Gaussian semigroup (\cite{MR2598208}).

\section{Preliminaries}\label{sec:p}
In what follows, $\Rd$ denotes the Euclidean space of dimension $d\ge
2$, $dy$ stands for the Lebesgue measure on $\Rd$, and we let
$$1<\alpha<2.$$ Without further mention we will only consider Borelian sets, measures and functions in $\Rd$.
By $x\cdot y$ we denote the Euclidean scalar product of $x,y\in \Rd$.
We let $B(x,r)=\{y\in \Rd:
|x-y|<r\}$.
For $D\subset \Rd$ we denote 
$$\delta_D(x) =\dist(x,D^c)\,,$$ 
the distance to the complement of $D$. 
\begin{df}\label{def:C11}
 Nonempty open $D\subset \Rd$ is of class $C^{1,1}$ at scale $r>0$ 
if for
  every $Q\in \partial D$ there are balls 
$B(x',r)\subset D$ and $B(x'',r)\subset D^c$ tangent at $Q$. 
\end{df}
Thus, $B(x\rq{},r)$ and $B(x\rq{}\rq{},r)$ are the {\it inner}\/ an {\it outer}\/ balls tangent at $Q$, respectively. If $D$ is $C^{1,1}$ at some unspecified scale (hence also at all smaller scales), 
then we simply say $D$ is $C^{1,1}$.  
The {\it localization radius}, $$r_0=r_0(D)=\sup\{r: D \mbox{ is } C^{1,1} \mbox{ at scale } r\},$$
refers to the local geometry of $D$, while the {\it diameter},
$${\rm diam}(D)=\sup\{|x-y|:\;x,y\in D\}\,,$$ refers to the global geometry of $D$.
The ratio ${\rm diam}(D)/r_0(D)\geq 2$ will be called the {\it distortion}\/ of $D$. 
We can localize each $C^{1,1}$ open set as follows.
\begin{lem}\label{l:loc}
There exists $\kappa>0$ such that if $D$ is $C^{1,1}$ at scale $r$ and $Q\in \partial D$, then there is a $C^{1,1}$ domain $F\subset D$ with $r_0(F)>\kappa r$, ${\rm diam }(F)< 2r$ and 
\begin{equation}\label{eq:loc}
D\cap B(Q,r/4)=F\cap B(Q,r/4)\,.
\end{equation}
\end{lem}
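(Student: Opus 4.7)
I would start from the rough set $E:=D\cap B(Q,r/2)$, which already satisfies $E\subset D$, $\diam(E)\le r<2r$, and $E\cap B(Q,r/4)=D\cap B(Q,r/4)$, but fails to be $C^{1,1}$ at the ``corner'' $\Sigma:=\partial D\cap\partial B(Q,r/2)$ --- a $(d-2)$-dimensional submanifold where $\partial E$ is only Lipschitz. The plan is to smooth this corner by an inscribed fillet of uniform radius $\kappa r$ and take $F$ to be the connected component (containing points near $Q$) of the resulting region.

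The first ingredient is a quantitative transversality statement. In local coordinates at $Q=0$ with inward normal $e_d$, the $C^{1,1}$ hypothesis at scale $r$ yields the graph representation $\partial D\cap B(0,\alpha r)=\{(y,\phi(y))\}$ with $\phi(0)=\nabla\phi(0)=0$ and $\|D^2\phi\|_\infty\le c_1/r$, for universal constants $\alpha,c_1$. Any $P\in\Sigma$ then has $|y_P|$ near $r/2$ and $\phi(y_P)=O(c_1 r)$, so the outward normal to $\partial B(Q,r/2)$ at $P$ is nearly horizontal, while the outward normal to $\partial D$ lies in a universal cone around $-e_d$. Consequently $\partial D$ and $\partial B(Q,r/2)$ meet along $\Sigma$ transversally, with opening angle bounded away from $0$ and $\pi$ by a dimensional constant.

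Given this transversality, inscribe the fillet along $\Sigma$: at each $P\in\Sigma$ take the unique $d$-dimensional ball of radius $\kappa r$ inside $E$ that is internally tangent to both $\partial D$ and $\partial B(Q,r/2)$; the envelope $\mathcal F$ of these balls is a $C^{1,1}$ hypersurface with principal curvatures at most $C/(\kappa r)$ that joins tangentially onto $\partial D$ and $\partial B(Q,r/2)$ along two smooth curves at distance $O(\kappa r)$ from $\Sigma$. Define $F$ as the open bounded region whose boundary consists of $\mathcal F$ together with the parts of $\partial D$ and $\partial B(Q,r/2)$ on its interior side. Then $F\subset E\subset D$, $\diam(F)<2r$, and $F\cap B(Q,r/4)=D\cap B(Q,r/4)$ because $\mathcal F$ sits in an $O(\kappa r)$-tube around $\Sigma$, staying outside $B(Q,r/4)$ once $\kappa$ is small; finally $F$ is $C^{1,1}$ with $r_0(F)\ge\kappa r$ because each of the three pieces of $\partial F$ has principal curvatures bounded by $C/(\kappa r)$ and all three pieces glue tangentially by construction.

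The \textbf{main obstacle} is producing $\mathcal F$ as a genuinely $C^{1,1}$ hypersurface with a universal scale $\kappa r$, rather than merely a $C^0$ or Lipschitz one. Once the transversality bound above is available, this reduces to a tubular-neighborhood construction around $\Sigma$: one describes $\mathcal F$ as a level set of an explicit $C^{1,1}$ function built from the signed distance functions to $\partial D$ and to $\partial B(Q,r/2)$, and verifies the curvature estimate by direct computation using $\|D^2\phi\|_\infty\le c_1/r$ together with the curvature $2/r$ of $\partial B(Q,r/2)$. The desired $\kappa$ then depends only on the dimension $d$.
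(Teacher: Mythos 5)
Your construction is genuinely different from the paper's. The paper does not fillet a corner at all: it writes down an explicit $C^{1,1}$ ``almond'' $K$ (an intersection of two spherical caps localizing the half-space $\{x_d>0\}$ at the origin), represents $D$ near $Q$ as the image $T(H)$ of the half-space under the graph map $Tx=(\tilde x,x_d+\phi(\tilde x))$, sets $F=TK$, and proves the $C^{1,1}$ bound $r_0(F)\ge\kappa r$ by an elementary computation: it takes the inner/outer tangent ball $B$ of $K$ at a boundary point $Q'$ and shows directly that $TB$ contains a ball of radius $\kappa r$ still tangent at $TQ'$, using only the $C^{1,1}$ estimate $|\nabla\phi(\tilde x)-\nabla\phi(\tilde y)|\le\eta|\tilde x-\tilde y|$ and the second-order Taylor bound (\ref{eq:C11f}). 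No envelopes, distance functions, or tubular neighborhoods appear. The advantage of the paper's route is that it is fully self-contained at the level of explicit inequalities; the advantage of your route, if completed, is that it is coordinate-free and conceptually transparent (round the corner of $D\cap B(Q,r/2)$).

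The gap in your proposal is precisely the step you flag as ``the main obstacle'': you assert that the fillet surface $\mathcal F$ is $C^{1,1}$ with principal curvatures $\lesssim 1/(\kappa r)$ and joins $\partial D$ and $\partial B(Q,r/2)$ tangentially, but you do not actually prove it, and this is where the real work lies. Proving it requires (i) showing that the locus of centers $\Gamma=\{x:\operatorname{dist}(x,\partial D)=\operatorname{dist}(x,\partial B(Q,r/2))=\kappa r\}$ is a $(d-2)$-dimensional $C^{1,1}$ manifold with reach $\gtrsim\kappa r$ (this uses the quantitative transversality you outline, but the reach estimate is not automatic from transversality of the two hypersurfaces alone), (ii) showing that the relevant piece of the boundary of the $\kappa r$-tube around $\Gamma$ is $C^{1,1}$ with curvature $\lesssim 1/(\kappa r)$, and (iii) showing that this piece matches $\partial D$ and $\partial B(Q,r/2)$ to first order along the two junction curves, so that the glued surface has no kink. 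Each of these is believable but none is one-line; in particular the ``level set of a $C^{1,1}$ function built from signed distances'' description you mention does not by itself give the inner/outer ball condition, since a $C^{1,1}$ function can have a degenerate gradient on its zero set. Also, your transversality estimate needs a bit more care than ``$\phi(y_P)=O(c_1 r)$'': from the two-sided ball condition at scale $r$ one has $|\phi(\tilde y)|\le |\tilde y|^2/r$, so for $|P-Q|=r/2$ the height $\phi(y_P)$ can be as large as $r/4$, and the resulting angle bound (sphere normal within roughly $30^\circ$ of horizontal, $\partial D$ normal in a cone around vertical of aperture controlled by $|\nabla\phi(y_P)|$) must be tracked to ensure the opening angle is uniformly bounded away from $0$ and $\pi$; this works, but uses the explicit quadratic bound on $\phi$ coming from the tangent-ball definition, not merely a generic $\|D^2\phi\|\lesssim 1/r$. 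In short, the idea is sound but several quantitative verifications that constitute the heart of the lemma are deferred; the paper's construction is designed precisely to avoid them.
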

\noindent
We will write $F=F(z,r)$, and we note that the distortion of $F$ is at most $2/\kappa$, an absolute constant. The proof of Lemma~\ref{l:loc} is given in the Appendix. 
$$\mbox{In what follows $D$  will be a nonempty bounded $C^{1,1}$ open set in $\Rd$.}$$
We note that such $D$ may be disconnected but then it may only have a finite number of connected components, at a positive distance from each other.

We will now give a brief review of the potential theory of the fractional Laplacian, and of the fractional Laplacian perturbed by gradient operators. The former case is well known (\cite{Landkof}, \cite{MR850715}, \cite{MR2569321}, \cite{MR1671973}, \cite{MR2365478}). The latter case is similar but 
we feel it calls for more details,
and they are given in the Appendix.

Let $\mathcal{A}_{d,\gamma}= \Gamma\big((d-\gamma)/2\big)/ (2^\gamma
\pi^{d/2}|\Gamma(\gamma/2)|)$ and
$$  \nu(y)=\mathcal{A}_{d,-\alpha}|y|^{-d-\alpha}\,,\quad y\in \Rd\,.
$$
The coefficient $\mathcal{A}_{d,-\alpha}$ is so chosen that
\begin{equation}
  \label{eq:trf}
  \int_{\Rd} \left[1-\cos(\xi\cdot y)\right]\nu(y)dy=|\xi|^\alpha\,,\quad
  \xi\in \Rd\,.
\end{equation}
For (smooth compactly supported) $\phi\in C^\infty_c(\Rd)$,
the fractional Laplacian is
\begin{equation}\label{eq;deful}
  \Delta^{\alpha/2}\phi(x) = 
  \lim_{\varepsilon \downarrow 0}\int_{|y|>\varepsilon}
  \left[\phi(x+y)-\phi(x)\right]\nu(y)dy\,,
  \quad
  x\in \Rd
\end{equation}
(see \cite{MR1671973, MR2569321} for a broader setup).
If $x\not \in {\rm supp}\, \phi$ then
\begin{equation}\label{eq;defulpn}
  \Delta^{\alpha/2}\phi(x) = 
  \int_{\Rd}
  \phi(y) \nu(y-x)dy\,.
\end{equation}
If $r>0$ and $\phi_r(x)=\phi(r x)$ then
\begin{equation}
  \label{eq:scfr}
  \Delta^{\alpha/2}\phi_r(x)=r^\alpha \Delta^{\alpha/2}\phi(rx)\,,\quad
  x\in \Rd\,.  
\end{equation}
In this respect, $\Delta^{\alpha/2}$ behaves like differentiation of order $\alpha$.
We let $p_t$ be the smooth real-valued function on $\Rd$
with Fourier transform
\begin{equation}
  \label{eq:dpt}
  \int_ \Rd p_t(x)e^{ix\cdot\xi}\,dx=e^{-t|\xi|^\alpha}\,,\quad t>0\,,\;\xi\in
  \Rd\,.
\end{equation}
According to 
(\ref{eq:trf}) 
and the L\'evy-Khinchine formula,
$\{p_t\}$ is a probabilistic convolution semigroup with L\'evy measure
$\nu(y)dy$, see \cite{MR1739520}, \cite{MR2320691} or
\cite{MR2569321}. Let
$$
p(t,x,y)=p_t(y-x)\,.
$$
Using (\ref{eq:dpt}) one proves that $p$ is the heat kernel of the fractional Laplacian:
\begin{equation}
  \label{eq:fsol}
  \int\limits_{s}^\infty\int\limits_{\Rd}
  p(u-s,x,z)\left[
    \partial_u\phi(u,z)+\Delta^{\alpha/2}_z \phi(u,z)\right]\,dzdu
  = -\phi(s,x)\,,
\end{equation}
where $s\in \RR$, $x\in \Rd$ and $\phi\in C^\infty_c(\RR\times \Rd)$.

We consider the time-homogeneous transition probability 
$$
(t,x,A)\mapsto\int_A p(t, x,y)dy\,,\quad t>0\,,\;x\in \Rd\,,\; A\subset
\Rd\,.
$$
By Kolmogorov's and Dinkin-Kinney's theorems the transition
probability defines in the usual way Markov probability measures
$\{\PP^x,\,x\in \Rd\}$ on the space $\Omega$ of the
right-continuous and left-limited functions $\omega :[0,\infty)\to \Rd$.
We let $\EE^x$ be the corresponding integrations.
We will denote by $X=\{X_t\}_{t\geq 0}$ the canonical process on $\Omega$,
$X_t(\omega)=\omega(t)$.
In particular, according to (\ref{eq:dpt}),
\begin{equation}\label{character}
  \EE^0 e^{iX_t\cdot \xi} = e^{-t|\xi|^\alpha}, \qquad \qquad \xi \in \RR^d, \;t \ge 0\,.
\end{equation}
In fact, $(X,\PP^0)$ is a L\'evy process in $\RR^d$
with zero Gaussian part and drift, and with $\nu(y)dy$ as the L\'evy
measure \cite{MR1739520}.
It follows from (\ref{eq:dpt}) that
\begin{equation}
  \label{eq:sca}
  p_t(x)=t^{-d/\alpha}p_1(t^{-1/ \alpha}x)\,,\quad t>0\,,\;x\in \Rd\,.
\end{equation}
It is well-known that $p_1(x)\appc{C}1\land |x|^{-d-\alpha}$, hence
\begin{equation}\label{eq:oppt}
p_t(x)\appc{C}t^{-d/\alpha}\land \frac{t}{|x|^{d+\alpha}}\,,\quad t>0,x\in \Rd\,.
\end{equation}
Symbol $\appc{C}$ means that either
ratio of the sides is bounded by $C\in (0,\infty)$, and
$C$ does not depend on the variables shown, here $t$ and $x$.
We will write mere $\approx$ if $C$ is unimportant or understood.
Constants will usually be denoted with generic $C$ (in statements) or $c$ (in proofs), and we will occasionally enumerate them for convenience of referencing. 
As usual, $a \land b =\min(a,b)$ and $a\vee b = \max(a,b)$. 
In what follows we will often use the identity
\begin{equation}\label{eq:max}
ab=(a\land b)(a\vee b)\,.
\end{equation}
In view of (\ref{eq:sca}) and the fact that each $p_t$ is a radial
function, $X$ is called the isotropic $\alpha$-stable L\'evy process
(see \cite{MR1739520}, \cite{MR2320691} for a discussion of general stable L\'evy processes).  
We introduce the Riesz potential kernel (for $d>\alpha$),
\begin{equation}\label{eq:pkkk}
 {\cal A}_{d,\alpha} |x|^{\alpha-d}=\int_0^\infty p_t(x)dt\,, \quad x\in \Rd\,.
\end{equation}
This is infinite if $x=0$, see (\ref{eq:oppt}).

To study $\Delta^{\alpha/2}$ with Dirichlet conditions we will consider the {\it time of the first exit}\/ of the (canonical) process from $D$,
$$\tau_D=\inf\{t>0: \, X_t\notin D\}\,.$$
We let $\omega^x_D(B)=\PP^x(X_{\tau_D}\in B)$, the $\alpha$-harmonic
measure of $D$ (\cite{MR850715}, \cite{MR0126885}, \cite{Landkof}).
The joint distribution of $(\tau_D, X_{\tau_D})$ 
defines the transition density of the process {\it killed}\/ when leaving $D$
(\cite{MR0079377}, \cite{MR0264757}, \cite{MR1329992}):
$$
p_D(t,x,y)=p(t,x,y)-\EE^x[\tau_D<t;\, p(t-\tau_D, X_{\tau_D},y)],\quad t>0
,\,x,y\in \Rd \,.
$$
By Blumenthal's 0-1 law, radial symmetry of $p_t$ 
and $C^{1,1}$ geometry of the boundary of $\partial D$, we have 
$\PP^x(\tau_D=0)=1$ for every $x\in D^c$. 
In particular, $p_D(t,x,y)=0$ if $x\in D^c$ or $y\in D^c$.
By the strong Markov property,
$$
\EE^x[t<\tau_D;\, f(X_t)]=
\int_\Rd f(y)p_D(t,x,y)dy
\,,
\quad t>0\,,\; x\in\Rd\,,
$$
for functions $f\geq 0$.
The Chapman-Kolmogorov equations hold for $p_D$,
$$
\int_\Rd p_D(s,x,z)p_D(t,z,y)dz=p_D(s+t,x,y)\,,\quad s,t>0 ,\,
x,y\in \Rd\,.
$$
Also, $p_D$ is jointly continuous when $t\neq 0$, and we have
\begin{equation}\label{eq:gg}
  0\leq p_D(t,x,y)=p_D(t,y,x)\leq p(t,x,y)\,.
\end{equation}
In particular,
\begin{equation}
  \label{eq:9.5}
  \int_\Rd p_D(t,x,y)dy\leq 1\,.
\end{equation}
For $s\in \RR$, $x\in \Rd$, and $\phi\in C^\infty_c(\RR\times D)$, we
have (compare (\ref{eq:fsol}))
\begin{equation}
  \label{eq:fsolD}
  \int\limits_{s}^\infty\int\limits_{\Rd}
  p_D(u-s,x,z)\left[
    \partial_u\phi(u,z)+\Delta^{\alpha/2}_z \phi(u,z)\right]\,dzdu
  = -\phi(s,x)\,,
\end{equation}
which justifies calling $p_D$ the heat
kernel of the (Dirichlet) fractional Laplacian {on} $D$. 
We define
\begin{equation}\label{eq:12.5}
G_D(x,y)=\int_0^\infty p_D(t,x,y)dt, \quad x,y\in \Rd\,.
\end{equation}
It follows that $G_D(x,y)$ is symmetric and lower semi-continuous,
and
\begin{equation}\label{eq:GreenFormula}
G_D(x,y) + \int_{D^c} \mathcal{A}_{d,\alpha} |y-z|^{\alpha-d} \omega_D^x(dz)= \mathcal{A}_{d,\alpha} |x-y|^{\alpha-d}\,.
\end{equation}
The Green operator of $\Delta^{\alpha/2}$ for $D$ is
$$
G_Df(x) = \EE^x \int_0^{\tau_D}f(X_t)dt=\int_{\Rd}G_D(x,y)f(y)dy, \quad
x \in \RR^d\,,
$$
and we have 
\begin{equation}\label{eq:fg}
G_D (\Delta^{\alpha/2}\phi)(x)=-\phi(x)\,,\quad x\in \Rd\,,\quad \phi \in C^\infty_c(D)\,.
\end{equation}
A result of Ikeda and Watanabe \cite{MR0142153} asserts that for $x\in
D$ the $P^x$-distribution of $(\tau_D, X_{{\tau_D}-},X_{\tau_D})$
restricted to $X_{{\tau_D}-}\neq X_{\tau_D}$ is given by the density
function
\begin{equation}
  \label{eq:IWf}
  (s,u,z)\mapsto p_D(s,x,u)\nu(z-u)\,.
\end{equation}
The $C^{1,1}$ geometry of $D$
implies that $P^x(X_{{\tau_D}-}\neq X_{\tau_D})=1$ for $x\in D$ (\cite{MR1438304}).
By (\ref{eq:12.5}), (\ref{eq:IWf}) and Tonelli's theorem the $P^x$-distribution of
$X_{\tau_D}$ has a density function, called the Poisson kernel and defined as
\begin{equation}
  \label{eq:Pk}
  P_D(x,z)=\int_D G_D(x,y)\nu(z-y)dy\,.
\end{equation}
The Green function and Poisson kernel of the ball
are known explicitly:
\begin{equation}\label{wfg}
  G_{B(x_0,r)}(x,v)=
{\cal B}_{d,\alpha}\,
|x-v|^{\alpha-d}\int_{0}^{w}\frac{s^{\alpha/2-1}}
  {(s+1)^{d/2}}\,ds\,,
\end{equation}
\begin{equation} \label{eq:poisson:ball}
P_{B(x_0,r)}(x, y) =
{\cal C}_{d, \alpha}
\left[\frac{r^2 - |x-x_0|^2}{|y-x_0|^2 -
      r^2}\right]^{\alpha / 2} |x - y|^{-d} \,, 
\end{equation}
where ${\cal  B}_{d,\alpha}=\Gamma(d/2)/(2^{\alpha}\pi^{d/2}[\Gamma(\alpha/2)]^{2})$,
${\cal C}_{d,\alpha}=\Gamma(d/2)\pi^{-1-d/2}\sin (\pi \alpha/2)$,
$$
w=(r^2-|x-x_0|^{2})(r^2-|v-x_0|^{2})/|x-v|^{2}\,,
$$
$|x-x_0|<r$, $|v-x_0|<r$, and  $|y-x_0|\geq r$;
see \cite{MR0126885}, \cite{bib:Rm} or \cite{Landkof}.

The next estimate was proved by Kulczycki \cite{MR1490808} and Chen and
Song \cite{MR1654824},
\begin{eqnarray}
  &&G_D(x,y)\, 
  \appc{C}\;  |x-y|^{\alpha-d}\left(\frac{\delta_D(x)^{\alpha/2}\delta_D(y)^{\alpha/2}}{|x-y|^\alpha} \land 1\right)
\label{GreenEstimates}\\
  &&\approx\;  |x-y|^{\alpha-d}
\frac{\delta_D(x)^{\alpha/2}\delta_D(y)^{\alpha/2}}{[\delta_D(x)\vee |x-y| \vee \delta_D(y)]^\alpha}
\,, \qquad x,y \in D\,.
\label{GreenEstimatesTJ}
\end{eqnarray}
The reader may check equivalence of (\ref{GreenEstimates}) and (\ref{GreenEstimatesTJ}) by first considering the case $\delta_D(x)\appc{3}\delta_D(y)$.
We like to remark that (\ref{GreenEstimatesTJ}) may be also regarded a direct consequence of the approximate factorization of the Green function of Lipschitz open sets,  see \cite[Theorem 21]{MR1991120}.
It is well known that $C=C(d,\alpha,\lambda)$ in (\ref{GreenEstimates}) and (\ref{GreenEstimatesTJ}), 
if  ${\rm diam}(D)/r_0(D)\leq \lambda$, i.e. $C$ may be so selected to depend only on $d$, $\alpha$ and (an upper bound for) the distortion of $D$. This follows from the proofs of \cite{MR1490808} and \cite{MR1654824} and is  explicitly stated in \cite{MR1991120}, see also \cite{MR1490808}.

We will consider a nonnegative function $u$ on $\Rd$, and an open set $U \subset \Rd$.
$u$ is called $\alpha$-harmonic on $U$ if for each open bounded  
$V\subset\overline{V} \subset U$,
$$
u(x) = \EE^xu(X_{\tau_V}), \quad x \in V.
$$
We say that $u$ is
regular $\alpha$-harmonic on $U$ if also
$$
u(x) = \EE^xu(X_{\tau_U})
\,, \quad x \in U\,.
$$
Here we assume absolute integrability of the expectations, and $\EE^xu(X_{\tau_U})$ is understood as
$\EE^x[\tau_U<\infty;\, u(X_{\tau_U})]$.
For instance $x\mapsto G_D(x,y)$ is
$\alpha$-harmonic in $D \setminus \{y\}$. 
In fact, by the strong Markov property, $G_D(x,y)=G_V(x,y)+\EE^x G_D(X_{\tau_V},y)$ for every open $V\subset U$, and  $G_V(x,y)=0$ if $\dist(y,V) >0$  (see, e.g.,
\cite{MR2365478}).

The following two results can be found in \cite{MR1438304}, see also \cite{MR2365478}.
\begin{lem}[Harnack inequality]
  \label{HI}
  Let $x,y \in \Rd$, $s>0$ and $k\in \mathbb{N}$ satisfy $|x-y| \le
  2^ks$. Let  function $u$ be nonnegative in $\Rd$ and {\ah}
  in $B(x,s) \cup B(y,s)$. There is $C = C(d, \alpha)$ such that
  \begin{equation}
    \label{HP}
    C^{-1}2^{-k(d+\alpha)}u(x) \le u(y) \le C 2^{k(d+\alpha)}u(x).
  \end{equation}
\end{lem}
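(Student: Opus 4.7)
The plan is to prove the inequality in two stages: a scale-invariant local Harnack inequality on balls, and then a jump-based comparison that supplies the polynomial factor $2^{-k(d+\alpha)}$ for distant points. Both steps rely only on the Poisson representation together with the explicit formula (\ref{eq:poisson:ball}) for the Poisson kernel of a ball.

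First I would prove the following local version: if $u\ge 0$ on $\Rd$ is $\alpha$-harmonic on $B(z,r)$, then $u(v_1)\approx u(v_2)$ for $v_1,v_2\in B(z,r/2)$, with constants depending only on $d$ and $\alpha$. For any $r'\in (r/2,r)$, $\alpha$-harmonicity gives $u(v)=\EE^v u(X_{\tau_{B(z,r')}})$. Since balls are $C^{1,1}$, the exit is almost surely by a jump, so the Ikeda--Watanabe formula (\ref{eq:IWf}) together with (\ref{eq:Pk}) yields the Poisson representation
$$u(v)=\int_{B(z,r')^c} u(w)\,P_{B(z,r')}(v,w)\,dw,\qquad v\in B(z,r/2).$$
From (\ref{eq:poisson:ball}), the ratio $P_{B(z,r')}(v_1,w)/P_{B(z,r')}(v_2,w)$ is bounded by a universal constant depending only on $d,\alpha$, uniformly for $w\in B(z,r')^c$. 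Integrating against $u\ge 0$ yields the local Harnack.

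For the main claim, if $|x-y|\le s/2$ the result follows directly from the local Harnack on $B(x,s)$ (it corresponds to $k=0$). Otherwise, set $\rho=\min(s,|x-y|)/4\ge s/8$, so that $B(x,\rho)$ and $B(y,\rho)$ are disjoint and their closures lie in $B(x,s)$ and $B(y,s)$ respectively. Restricting the Poisson representation for $u(x)$ on $B(x,\rho)$ to jumps landing in $B(y,\rho)$,
$$u(x)\ge\int_{B(y,\rho)} u(w)\,P_{B(x,\rho)}(x,w)\,dw.$$
For $w\in B(y,\rho)$ one has $|w-x|\approx|x-y|$, so (\ref{eq:poisson:ball}) gives $P_{B(x,\rho)}(x,w)\ge c\,\rho^{\alpha}|x-y|^{-d-\alpha}$. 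Combined with the local Harnack bound $u(w)\ge c'u(y)$ on $B(y,\rho)\subset B(y,s/2)$ and $|B(y,\rho)|\approx \rho^d$, this gives
$$u(x)\ge c''\,\rho^{d+\alpha}\,|x-y|^{-d-\alpha}\,u(y)\ge c'''\left(\frac{s}{|x-y|}\right)^{d+\alpha}u(y)\ge c'''\,2^{-k(d+\alpha)}u(y).$$
The reverse inequality follows by swapping $x$ and $y$.

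The main technical subtlety is the passage from the definition of $\alpha$-harmonicity (averaging over bounded $V\subset\overline V\subset U$) to the clean Poisson representation displayed above: one must justify the identification of the exit distribution with $P_{B(z,r')}(v,\cdot)\,dw$ via Ikeda--Watanabe, together with the finiteness of the integral, which follows from the $|w|^{-d-\alpha}$ tail of $\nu$ and from the integrability already implicit in $\EE^v u(X_{\tau_{B(z,r')}})<\infty$. Conceptually, the polynomial rate $2^{-k(d+\alpha)}$ directly reflects the tail of the L\'evy measure $\nu(y)\propto|y|^{-d-\alpha}$: a single long jump suffices to compare $u(x)$ with $u(y)$, thereby avoiding the exponential loss inherent to any chaining argument typical of the Laplacian setting.
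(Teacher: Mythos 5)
The paper does not prove this lemma itself; it is quoted from the literature (\cite{MR1438304}, \cite{MR2365478}). Your argument is correct and is essentially the standard proof in those references: a scale-invariant local Harnack inequality obtained from uniform comparability of the explicit Poisson kernel (\ref{eq:poisson:ball}) over a ball, combined with a single-long-jump estimate that supplies the polynomial loss $2^{-k(d+\alpha)}$ directly from the $|y|^{-d-\alpha}$ tail of $\nu$. One small precision to add: in the local step, ``for any $r'\in(r/2,r)$'' should be ``fix, say, $r'=3r/4$'', since the bound on the ratio $P_{B(z,r')}(v_1,\cdot)/P_{B(z,r')}(v_2,\cdot)$ for $v_1,v_2\in B(z,r/2)$ degenerates as $r'\downarrow r/2$; with a fixed ratio $r'/r$ the constant is indeed universal in $d,\alpha$.
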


\begin{lem}
  \label {BHP}
  Let $Z \in \partial{D}$ and $r \in (0,r_0]$, $0<p<1$. Assume that
  functions $u$, $v$ are nonnegative in $\Rd$ and regular \ah{} and non-zero
  in $D\cap B(x_0,r)$. If $u$ and $v$ vanish on $D^c \cap B(x_0,r)$ then
  \begin{equation}
    \label{BHPE}
    C^{-1}\frac{u(x)}{v(x)} \le \frac{u(y)}{v(y)} \le C\frac{u(x)}{v(x)},
  \end{equation}
  for $x,y \in D \cap B(x_0,pr)$. Here $C= C(d, \alpha,p)$.
\end{lem}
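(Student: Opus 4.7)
The strategy is to reduce \eqref{BHPE} to an approximate boundary factorization of $u$ and $v$. Let $A=A_r \in D \cap B(x_0,r/2)$ be a \emph{corkscrew} reference point with $\delta_D(A) \asymp r$, whose existence is ensured by the $C^{1,1}$ assumption (Definition~\ref{def:C11} and Lemma~\ref{l:loc}). It suffices to prove
$$
\frac{u(x)}{u(A)} \;\asymp\; \frac{v(x)}{v(A)}, \qquad x \in D\cap B(x_0,pr),
$$
with constants depending only on $d,\alpha,p$; dividing then yields \eqref{BHPE}.

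First I would establish a Carleson-type upper bound $u(x) \le C u(A)$ on $D\cap B(x_0,pr)$ (and symmetrically for $v$). Away from $\partial D$ this follows from Lemma~\ref{HI} applied along a Harnack chain inside $D\cap B(x_0,r)$ joining $x$ to $A$, whose length is controlled by $p$. For $x$ close to $\partial D$, write $u(x)=\EE^x u(X_{\tau_V})$ for a ball $V \subset D\cap B(x_0,r)$ centered at $x$, and split the expectation: the contribution from $X_{\tau_V}\in B(x_0,r)$ can be iterated down (using $u \equiv 0$ on $D^c\cap B(x_0,r)$) to an interior value handled by Harnack, while the contribution from $X_{\tau_V}\notin B(x_0,r)$ uses the Ikeda--Watanabe identity \eqref{eq:IWf} and the polynomial decay of $\nu$ to give a universal constant times $u(A)$.

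The core step is a boundary factorization. For $x$ close to $\partial D \cap B(x_0, r/2)$, Definition~\ref{def:C11} provides an inner tangent ball $B'=B(z,s) \subset D$ containing $x$ with $s \asymp \delta_D(x)$. Regular $\alpha$-harmonicity of $u$ on $B'$ combined with $u \equiv 0$ on $D^c \cap B(x_0,r)$ gives
$$
u(x)=\int_{(B')^c \setminus (D^c\cap B(x_0,r))}P_{B'}(x,y)\,u(y)\,dy.
$$
Using the explicit Poisson formula \eqref{eq:poisson:ball}, the $x$-dependence of $P_{B'}(x,y)$ separates into a universal factor $(s^2-|x-z|^2)^{\alpha/2}$ times a kernel which is essentially $x$-independent for $y$ in the integration region (since $|x-y|\asymp|z-y|$ there). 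Combining with the Carleson bound to control the far field yields
$$
u(x) \;\asymp\; (s^2-|x-z|^2)^{\alpha/2}\cdot \Phi_u, \qquad \Phi_u \asymp u(A)\,r^{-\alpha/2}.
$$
The identical factorization applies to $v$, so the $x$-dependent factor cancels in $u(x)/v(x)$, leaving $u(x)/v(x) \asymp u(A)/v(A)$.

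The main obstacle is the uniform control of the far part of the Poisson integral, since $u$ is only assumed to vanish on $D^c \cap B(x_0,r)$ rather than globally. One has to split the integration region into the part in $D$ (handled by the Carleson bound together with Harnack at a scale $\asymp r$) and the part in $D^c \setminus B(x_0,r)$ (where $u$ is unconstrained but is multiplied by a Poisson factor that already decays like $|y-z|^{-d-\alpha}$). Matching this decomposition for $v$ shows both far contributions factor through geometric kernels that are the same modulo bounded constants, so they absorb into $u(A)/v(A)$. The $C^{1,1}$ hypothesis enters precisely through the inner tangent ball, which secures the sharp $(s^2-|x-z|^2)^{\alpha/2} \asymp \delta_D(x)^{\alpha/2}$ boundary behavior shared by all regular $\alpha$-harmonic functions vanishing on $D^c \cap B(x_0,r)$.
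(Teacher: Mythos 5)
The paper does not prove Lemma~\ref{BHP}; it is quoted from the literature (the paper cites \cite{MR1438304} and \cite{MR2365478}), so there is no internal proof to match against. Your sketch captures the broad flavor of one standard route to BHP (Carleson estimate plus boundary factorization via the Poisson kernel of an inner tangent ball), but as written it has two genuine gaps that prevent it from being a proof.

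First, the Carleson-type bound $u(x)\le C\,u(A)$ for $x\in D\cap B(x_0,pr)$ is precisely the hard analytic content of boundary Harnack, and your argument for it is circular. You propose to write $u(x)=\EE^x u(X_{\tau_V})$ and "iterate down" the contribution from $X_{\tau_V}\in B(x_0,r)$; but such an iteration needs an a priori uniform bound to terminate, which is exactly what Carleson is supposed to supply. In \cite{MR1438304} this step is accomplished by a careful chain/box construction and a barrier argument based on the maximum principle, none of which appears in your sketch.

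Second, in the factorization step you claim that $P_{B'}(x,y)$ splits into $(s^2-|x-z|^2)^{\alpha/2}$ times an essentially $x$-independent kernel because $|x-y|\asymp|z-y|$ "in the integration region." The integration region is $(B')^c\setminus(D^c\cap B(x_0,r))$, and it contains the thin sliver $D\cap (B')^c$ near the tangent point, where $y$ may be very close to $\partial B'$ and hence $|x-y|$ can be much smaller than $s\asymp|z-y|$. There $u(y)$ is not zero (only the $D^c$ part is excised), so the $|x-y|^{-d}$ singularity of the Poisson kernel does genuinely depend on $x$ and does not cancel for free between $u$ and $v$. Handling this near-boundary sliver requires a separate geometric estimate (e.g.\ integrating $P_{B'}(x,\cdot)$ over the sliver with the $C^{1,1}$ curvature bound) that you don't supply. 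These two gaps are the reason the cited proofs of BHP are considerably longer than your one-paragraph factorization suggests.
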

We like to remark that the boundary Harnack inequality (Lemma~\ref{BHP}) in fact holds for general open sets and is equivalent to an approximate factorization of the Poisson kernel of general open sets, see \cite{MR2365478}.
We encourage the reader to factorize $P_{B(0,1)}(x,y)$ when $x,y$ are not too close to each other.
In passing we also note that an approximate factorization of $p_D(t,x,y)$ for Lipschitz domains is given in \cite{2010-KB-TG-MR-aop}.
Concluding this part of our preliminary discussion we refer the reader to \cite{MR2569321}, \cite{MR2365478} for more details and
references.

 We note that $\alpha$-harmonic functions are smooth where $\alpha$-harmonic; use (\ref{eq:poisson:ball}) or see \cite{MR1671973}.
The following gradient estimate is given in \cite[Lemma 3.2]{BKN}.
\begin{lem}\label{GradEstim}
Let $U$ be an arbitrary open set in $\Rd$. For every nonnegative
function $u$ on $\Rd$ which is \ah{} in $U$ we have
\begin{equation}
  \label{eq:5}
|\nabla u(x)| \le d \frac{u(x)}{\delta_U(x)}\,, \quad x\in U\,.
\end{equation}
\end{lem}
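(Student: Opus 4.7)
The plan is to exploit the explicit Poisson kernel formula \eqref{eq:poisson:ball} for balls, together with the Poisson representation of $\alpha$-harmonic functions, and simply differentiate under the integral sign at the center of a ball.

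Fix $x_0 \in U$ and pick any $r \in (0, \delta_U(x_0))$, so that $\overline{B(x_0,r)} \subset U$. Since $u$ is $\alpha$-harmonic on $U$ and nonnegative, the regularization of the fractional Laplacian on balls (and Ikeda--Watanabe together with \eqref{eq:Pk}) gives the Poisson representation
\[
u(x) \;=\; \int_{|y - x_0| > r} P_{B(x_0,r)}(x,y)\, u(y)\, dy, \qquad x \in B(x_0,r).
\]
I will differentiate both sides in $x$ at $x = x_0$. The clean feature of \eqref{eq:poisson:ball} is that the $x$--dependence of $P_{B(x_0,r)}(x,y)$ splits into a factor $[r^2 - |x - x_0|^2]^{\alpha/2}$ whose gradient at $x = x_0$ vanishes (because $\nabla_x [r^2 - |x - x_0|^2]^{\alpha/2} = -\alpha (x - x_0)[r^2 - |x - x_0|^2]^{\alpha/2 - 1}$), and the factor $|x - y|^{-d}$, whose gradient at $x = x_0$ equals $-d (x_0 - y) |x_0 - y|^{-d-2}$. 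Consequently
\[
\nabla_x P_{B(x_0,r)}(x,y)\Big|_{x = x_0} \;=\; -\,\frac{d\,(x_0 - y)}{|x_0 - y|^2}\, P_{B(x_0,r)}(x_0,y),
\]
so that
\[
\bigl|\nabla_x P_{B(x_0,r)}(x_0,y)\bigr| \;=\; \frac{d}{|y - x_0|}\, P_{B(x_0,r)}(x_0,y) \;\le\; \frac{d}{r}\, P_{B(x_0,r)}(x_0,y).
\]
Integrating against $u(y)\,dy$ on $\{|y - x_0| > r\}$ and using the Poisson representation at $x_0$ yields
\[
|\nabla u(x_0)| \;\le\; \int_{|y - x_0| > r} \bigl|\nabla_x P_{B(x_0,r)}(x_0,y)\bigr| u(y)\, dy \;\le\; \frac{d}{r}\, u(x_0).
\]
Letting $r \uparrow \delta_U(x_0)$ gives \eqref{eq:5}.

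The only genuine concern is justifying the differentiation under the integral sign. For this, observe that on any small neighborhood of $x_0$ contained in $B(x_0, r/2)$, the factor $[r^2 - |x - x_0|^2]^{\alpha/2 - 1}|x - x_0|$ is bounded by a constant depending only on $r$, and $|x - y|^{-d-1} \lesssim |y - x_0|^{-d-1}$ for $|y - x_0|$ large. Thus $|\nabla_x P_{B(x_0,r)}(x,y)|$ admits a uniform $x$--majorant on this neighborhood, proportional to $P_{B(x_0,r)}(x_0,y)$, which is $u$--integrable since $u(x_0) < \infty$. Dominated convergence therefore legitimates the differentiation. The argument is genuinely short; the main thing to notice is the happy cancellation making the radial factor contribute nothing to the gradient at the center, which is why the constant on the right-hand side is exactly $d$ and not something worse involving $\alpha$.
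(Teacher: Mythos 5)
Your proof is correct and is essentially the standard argument: differentiate the ball Poisson kernel representation at the center, where the gradient of the radial factor $[r^2-|x-x_0|^2]^{\alpha/2}$ vanishes and only the $|x-y|^{-d}$ factor contributes, giving the bound $d/r$. The paper itself does not prove this lemma but quotes it from \cite[Lemma 3.2]{BKN}, where the proof is precisely this computation; so your argument matches the source.
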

Since $G_U(\cdot,y)$ is $\alpha$-harmonic in $U \setminus\{y\}$, for
every $y \in U$ we obtain
\begin{equation}\label{eq:GradEstimGreen}
|\nabla_x G_U(x,y)| \le d \frac{G_U(x,y)}{\delta_U(x) \land |x-y|}\,,
\quad x,y \in U,\,\; x\neq y\,.
\end{equation}
We note in passing that a reverse inequality
holds locally at the boundary of
Lipschitz domains, with constant depending on the Lipschitz character
of $D$
(\cite[Lemma 4.5]{BKN}). 
In this sense (\ref{eq:5}) and (\ref{eq:GradEstimGreen}) are sharp.
Also, $\nabla_x G_U(x,y)$ is jointly continuous for $x\neq y\in U$, see \cite[(10)]{BKN}. 

Recall that $1<\alpha<2$.
We say that vector field $b:\Rd\to \Rd$
belongs to the Kato class $\pK_d^{\alpha-1}$ if 
\begin{equation}\label{eq:Kc}
\lim_{\eps \to 0} \sup_{x \in \RR^d} \int_{|x-z| < \eps}
|b(z)|\,|x-z|^{\alpha-1-d}\,dz = 0\,.
\end{equation}
For instance, if $b$ is bounded or if $|b(z)|\leq |z|^{1-\alpha+\varepsilon}$
and $0<\varepsilon<\alpha-1$, then $b \in \pK_d^{\alpha-1}$. 
Without much mention elements of $\pK_d^{\alpha-1}$ will either be vector fields $\Rd\to \Rd$ or real-valued test functions $\Rd\to \RR$, i.e. $\pK^{\alpha-1}_d$ is more a condition than a class.
Since $|x-z|^{\alpha-1-d}$ is locally bounded from below, $|b(z)|dz$ is a locally finite measure, and (\ref{eq:Kc}) is a local uniform integrability condition. 
If $b\in \pK^{\alpha-1}_d$ and $f$ is bounded, then $fb\in \pK^{\alpha-1}_d$, in particular, $fb$ is locally integrable.
We note that $ \pK_d^{\alpha-1}\subset\pK^\alpha_d$, where $\pK^\alpha_d$ is defined by
\begin{equation}\label{eq:akc}
\lim_{\eps \to 0} \sup_{x \in \RR^d} \int_{|x-z| < \eps}
|b(z)|\,|x-z|^{\alpha-d}\,dz = 0\,.
\end{equation}

Following \cite{MR2283957} and \cite{2009-TJ-KS-jee}
we recursively define,
for $t>0$ and $x,y \in \RR^d$, 
$$p_0(t,x,y)  =  p(t,x,y)\,,$$ 
$$
 p_n(t,x,y)  =  \int_0^t \int_{\RR^d} p_{n-1}(t-s,x,z) b(z) \cdot \nabla_z p(s,z,y)\,dz\,ds\,,\quad n \ge 1\,,
$$
and we let
\begin{equation}
\tp=\sum_{n=0}^\infty p_n\,.
\end{equation}
The series converges absolutely, $\tp$ is a continuous  probability transition density function,  and
\begin{equation}\label{ptxy_comp}
  c_T^{-1} p(t,x,y) \le \tp(t,x,y) \le c_Tp(t,x,y)\,,\qquad x,y \in \RR^d\,,\; 0<t<T\,,
\end{equation}
where $c_T \to 1$ if $T \to 0$, see \cite[Theorem 2]{MR2283957}. 
From a general perspective the approach of \cite{MR2283957}, \cite{2009-TJ-KS-jee} consist of using the semigroup as test functions, setting the assumptions on the perturbation so that 
$p_1$ is dominated by $p$ in short time, and recursively estimating multiple integrals defining $p_n$, so that the comparability with $p$ is preserved.
Auxiliary estimates of $\nabla_z p(s,z,y)$ are obtained in \cite{MR2283957, 2009-TJ-KS-jee} by subordination to the Gaussian kernel, but the scope of the method is wider. For instance applications to {S}chr\"odinger perturbations of general transition densities are given in \cite{MR2457489}, \cite{MR2507445}.

We let $\tPP$, $\tEE$ be the Markov distributions and expectations defined by transition density $\tp$ on the canonical path space.
We define the heat kernel of $L$ on $D$ by the usual G. Hunt's formula,
\begin{equation}\label{eq:Hunt}
\tp_D(t,x,y) = \tp(t,x,y) - \tEE^x\left[\tau_D < t;\; \tp(t-\tau_D,
  X_{\tau_D},y)\right]\,.
\end{equation}
We denote by $\tG_D(x,y)$ and $\tG_D$ the Green function and operator
of $L$ on $D$,
\begin{equation}\label{eq:deftG}
\tG_D(x,y)=\int_0^\infty \tp_D(t,x,y)dt\,,
\end{equation}
$$
\tG_D \phi(x)=\int_\Rd \tG_D(x,y)\phi(y)dy\,.
$$
By Blumenthal's 0-1 law, $\tp_D(t,x,y)=0$ and $\tG_D(x,y)=0$ if $x\in D^c$ or $y\in D^c$, see (\ref{ptxy_comp}).
The next lemmas rely on the definition of $\tp$ and generalize results stated above for $\Delta^{\alpha/2}$. The proofs of Lemma~\ref{l:wkp},~\ref{lem:lsgp} and~\ref{lem:tgi} are moved to the Appendix. 
\begin{lem}\label{l:wkp}
For $s>0$, $x\in D$ and $\phi \in C_c^\infty\big((0,\infty)\times D\big)$ we have 
\begin{equation}\label{tp_D_fs}
\int_s^\infty\!\!\!\! \int_{D} \tp_D(u-s,x,z) \left(\partial_u + \Delta_z^{\alpha/2} + b(z) \cdot \nabla_z\right) \phi\,(u,z) \,dz\,du = - \phi(s,x)\,.
\end{equation}
\end{lem}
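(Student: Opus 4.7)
I would reduce (\ref{tp_D_fs}) to the analogous free-space identity for $\tp$, namely
\begin{equation}\label{eq:freeid}
\int_s^\infty\!\int_{\Rd} \tp(u-s,x,z)\bigl(\partial_u+\Delta^{\alpha/2}_z+b(z)\cdot\nabla_z\bigr)\phi(u,z)\,dz\,du=-\phi(s,x),
\end{equation}
which in turn I would derive from the known identity (\ref{eq:fsol}) for $p$, using the series definition $\tp=\sum_{n\ge 0}p_n$ and the recursion defining $p_n$. Once (\ref{eq:freeid}) is in hand, Hunt's formula (\ref{eq:Hunt}) and the strong Markov property dispose of the Dirichlet version because the surviving boundary term involves $\phi(s+\tau_D,X_{\tau_D})$ with $X_{\tau_D}\in D^c$, where $\phi$ vanishes.

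\textbf{Free identity via a telescoping sum.} Writing $I_n:=\int_s^\infty\!\int p_n(u-s,x,z)(\partial_u+\Delta^{\alpha/2}_z)\phi\,dz\,du$ and $J_n:=\int_s^\infty\!\int p_n(u-s,x,w)\,b(w)\cdot\nabla_w\phi(u,w)\,dw\,du$, equation (\ref{eq:fsol}) gives $I_0=-\phi(s,x)$. For $n\ge 1$ I substitute the recursion
$$p_n(u-s,x,z)=\int_0^{u-s}\!\!\int p_{n-1}(u-s-r,x,w)\,b(w)\cdot\nabla_w p(r,w,z)\,dw\,dr,$$
apply Fubini, substitute $u'=u-r$ (so $u'\ge s$, $r\ge 0$), and for each fixed $u',w$ set $\psi_{u'}(r,z):=\phi(u'+r,z)$. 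The inner $(r,z)$-integral becomes
$$\nabla_w\!\int_0^\infty\!\!\int p(r,w,z)\bigl(\partial_r\psi_{u'}+\Delta^{\alpha/2}_z\psi_{u'}\bigr)(r,z)\,dz\,dr=\nabla_w\bigl(-\psi_{u'}(0,w)\bigr)=-\nabla_w\phi(u',w)$$
by (\ref{eq:fsol}), yielding $I_n=-J_{n-1}$. Summing,
$$\sum_{n\ge 0}(I_n+J_n)=I_0+\sum_{n\ge 1}I_n+\sum_{n\ge 0}J_n=-\phi(s,x)-\sum_{n\ge 0}J_n+\sum_{n\ge 0}J_n=-\phi(s,x),$$
which is (\ref{eq:freeid}).

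\textbf{Dirichlet version via Hunt's formula.} Inserting (\ref{eq:Hunt}) into the left-hand side of (\ref{tp_D_fs}) and extending the $z$-integral from $D$ to $\Rd$ (legitimate since $\tp_D\equiv 0$ off $D$), the $\tp$-part equals $-\phi(s,x)$ by (\ref{eq:freeid}). For the exit part I apply Fubini to pull $\tEE^x$ outside the $(u,z)$-integrals and substitute $v=u-s-\tau_D$, setting $\psi(v,z):=\phi(v+s+\tau_D,z)$. The inner integral equals
$$\int_0^\infty\!\!\int_{\Rd}\tp(v,X_{\tau_D},z)\bigl(\partial_v+\Delta^{\alpha/2}_z+b\cdot\nabla_z\bigr)\psi(v,z)\,dz\,dv=-\psi(0,X_{\tau_D})=-\phi(s+\tau_D,X_{\tau_D})$$
by (\ref{eq:freeid}) applied at the (random) starting point $X_{\tau_D}$. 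Since $\phi\in C_c^\infty((0,\infty)\times D)$ and $X_{\tau_D}\in D^c$ $\tPP^x$-a.s.\ on $\{\tau_D<\infty\}$, the expectation vanishes, and (\ref{tp_D_fs}) follows.

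\textbf{Main obstacle.} The substantive technical work is justifying the repeated Fubini and the interchange of $\nabla_w$ with the $r,z$-integrals. The absolute integrability is guaranteed by (\ref{ptxy_comp}) on bounded time intervals, the pointwise bound (\ref{eq:oppt}) for $p$, the compact support of $\phi$, the $|z|^{-d-\alpha}$ decay of $\Delta^{\alpha/2}_z\phi(u,z)$ at spatial infinity, and the Kato condition (\ref{eq:Kc}) controlling $b$; the gradient estimate analogous to (\ref{eq:GradEstimGreen}) for the free kernel $p$ (available by direct calculation from (\ref{eq:sca}) or subordination) handles the $\nabla_w$ interchange. The remaining verifications are standard and, since the cleanest presentation lives in the Appendix, I would relegate them there together with the proofs of the companion Lemmas~\ref{lem:lsgp} and \ref{lem:tgi}.
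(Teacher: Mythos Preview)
Your proof is correct. The Dirichlet step is essentially identical to the paper's: both insert Hunt's formula (\ref{eq:Hunt}), apply the free-space identity once at $x$ and once at the random exit point $X_{\tau_D}$, and note that the resulting boundary term $\tEE^x\phi(s+\tau_D,X_{\tau_D})$ vanishes because $\phi$ is supported in $(0,\infty)\times D$ while $X_{\tau_D}\in D^c$.

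The one real difference concerns the free-space identity (\ref{eq:freeid}) itself. The paper does not prove it but quotes it from \cite{2009-TJ-KS-jee}; you supply a self-contained derivation via the telescoping relation $I_n=-J_{n-1}$, obtained by inserting the recursion for $p_n$ and invoking (\ref{eq:fsol}) for the inner $(r,z)$-integral. Your route makes the argument independent of the external reference at the cost of the extra Fubini/differentiation-under-the-integral checks you flag; the paper's route is shorter but less self-contained. One point worth stating explicitly in your write-up is that the telescoping needs $\sum_n|J_n|<\infty$ on its own, not merely convergence of $\sum_n(I_n+J_n)$; this follows from the geometric bounds $|p_n|\le C\eta^n p$ on bounded time intervals proved in \cite{MR2283957}, whereas the two-sided estimate (\ref{ptxy_comp}) for $\tp$ alone does not immediately control $\sum_n|p_n|$.
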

By (\ref{ptxy_comp}) we have 
$$
\lim_{t \to 0} \frac{\tp(t,x,y)}{t} = \lim_{t \to 0}
\frac{p(t,x,y)}{t} = \nu(y-x)\,.
$$
Thus the intensity of jumps of the canonical process $X$ under $\tPP^x$ is the same as under $\PP^x$. Accordingly, we obtain the following description.
\begin{lem}\label{lem:lsgp}
The $\tPP^x$-distribution of $(\tau_D,X_{\tau_D})$ on $(0,\infty)\times (\overline{D})^c$ has density
\begin{equation}
  \label{eq:IWft}
  \int_D \tp_D(u,x,y)\nu(z-y)\,dy\,,\quad u>0\,,\;\delta_D(z)>0\,.
\end{equation}
\end{lem}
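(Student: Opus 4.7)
The approach parallels the classical Ikeda--Watanabe proof of (\ref{eq:IWf}), exploiting that the jumping mechanism of $X$ is unchanged by the continuous first--order perturbation $b\cdot\nabla$: as just noted, $\tp(t,x,y)/t \to \nu(y-x)$ as $t\downarrow 0$, the same L\'evy intensity as for $\Delta^{\alpha/2}$. So jumps out of $D$ should still occur at rate $\nu(z-y)\,dz$, weighted by the $\tp_D$-density of the position just before the jump.

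The central step is a space--time Dynkin formula under $\tPP^x$: for $x\in D$ and $\psi \in C_c^\infty\bigl((0,\infty)\times (\overline{D})^c\bigr)$,
$$\tEE^x\psi(\tau_D,X_{\tau_D})\;=\;\tEE^x\!\int_0^{\tau_D}\bigl(\partial_s + \Delta_z^{\alpha/2}+b(z)\cdot\nabla_z\bigr)\psi\,(s,X_s)\,ds.$$
I would derive this by combining the whole--space weak formulation for $\tp$ (the analog of (\ref{eq:fsol}) with $L$ in place of $\Delta^{\alpha/2}$, obtained along the same lines as Lemma~\ref{l:wkp} but without the restriction to $D$) with Hunt's formula (\ref{eq:Hunt}). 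Substituting (\ref{eq:Hunt}) separates the whole--space integral into an interior piece, which vanishes since $\psi\equiv 0$ on a neighbourhood of $\overline{D}$, and an exit piece which, via the strong Markov property at $\tau_D$, yields the LHS above, with the residual producing the RHS.

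Given this formula, the rest is routine. For $s<\tau_D$ we have $X_s\in D$, so $X_s$ lies outside $\mathrm{supp}\,\psi(s,\cdot)$; consequently $\partial_s\psi(s,X_s)=0$, $\nabla\psi(s,X_s)=0$ (the drift term drops out), and (\ref{eq;defulpn}) gives
$$\Delta^{\alpha/2}\psi(s,X_s)\;=\;\int_{(\overline{D})^c}\psi(s,z)\,\nu(z-X_s)\,dz.$$
Inserting this and using the identity $\tEE^x[s<\tau_D;\,h(X_s)]=\int_D\tp_D(s,x,y)h(y)\,dy$ together with Fubini, one obtains
$$\tEE^x\psi(\tau_D,X_{\tau_D})\;=\;\int_0^\infty\!\int_{(\overline{D})^c}\psi(s,z)\Bigl[\int_D\tp_D(s,x,y)\nu(z-y)\,dy\Bigr]dz\,ds,$$
and arbitrariness of $\psi$ identifies the density as claimed.

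The hard part is the first step: Lemma~\ref{l:wkp} only admits test functions supported \emph{inside} $D$, so extending the weak formulation to the exit--supported $\psi$ requires careful bookkeeping of the exit contribution through (\ref{eq:Hunt}) and the strong Markov property. I expect this to reduce to an approximation argument in which all the integral manipulations are justified by the bound $\tp\appc{c_T}p$ from (\ref{ptxy_comp}) and by the Kato--class integrability (\ref{eq:Kc}) of $b$, which together guarantee absolute convergence throughout.
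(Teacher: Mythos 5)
Your proposal matches the paper's proof: the paper takes $\phi\in C_c^\infty\bigl((0,\infty)\times(\overline{D})^c\bigr)$, invokes the intermediate identity (\ref{eq:pdec}) from the proof of Lemma~\ref{l:wkp} (which gives exactly your space--time Dynkin formula, since $\phi(0,x)=0$), and then uses (\ref{eq;defulpn}) to reduce $\bigl(\partial_u+\Delta^{\alpha/2}_z+b\cdot\nabla_z\bigr)\phi(u,z)$ for $z\in D$ to the nonlocal kernel $\int\phi(u,\cdot)\nu(\cdot-z)$. The ``hard part'' you flag — extending Lemma~\ref{l:wkp}'s framework to exit-supported test functions — is already handled by (\ref{eq:pdec}), which is valid for any $\phi\in C_c^\infty((0,\infty)\times\Rd)$; the support restriction enters only in the final cancellation step, which is not needed here.
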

We define  the Poisson kernel of $D$ for $L$,
\begin{equation}\label{eq:djpt}
\tilde{P}_D(x,y)=\int_D \tG_D(x,z)\nu(y-z)\,dz\,,\quad x\in D\,,\;y\in D^c\,.
\end{equation}
By (\ref{eq:deftG}), (\ref{eq:djpt}) and (\ref{eq:IWft})  we have 
\begin{equation}\label{eq:IWt}
\tPP^x(X_{\tau_D} \in A)    =  \int_A\int_D \tG_D(x,z)\nu(y-z)\,dz\,dy =\int_A \tP_D(x,y)dy\,,
\end{equation}
if $A \subset (\bar{D})^c$. For the case of $A\subset \partial D$, we refer the reader to Lemma~\ref{l:nu}.

The following rough estimate of $\tG_D$ results from the estimates of $\tp$
and the fact that $X$ jumps out of $D$ at least with intensity 
$\int_{|y|>\diam(D)} \nu(y)dy>0$.
\begin{lem}\label{lem:GLDupper}
$\tG_D(x,y)$ is continuous for $x\neq y$, $\tG_D(x,x)=\infty$ for $x\in D$, and 
$$
\tG_D(x,y) \le C_0|x-y|^{\alpha-d}\,,\quad x,y\in \Rd\,,
$$
where $C_0=C_0(d,\alpha,{\rm diam}(D))$.
\end{lem}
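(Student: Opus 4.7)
The strategy is to split the time integral $\tG_D(x,y)=\int_0^\infty \tp_D(t,x,y)\,dt$ at $t=1$ and handle each piece with a different tool. Since $\tp_D$ vanishes off $D\times D$, we may assume throughout that $|x-y|\le\diam(D)$. For the short-time piece, Hunt's formula (\ref{eq:Hunt}) gives $\tp_D\le\tp$ and (\ref{ptxy_comp}) with $T=2$ gives $\tp(t,x,y)\le c\,p(t,x,y)$ on $(0,1)$, so (\ref{eq:pkkk}) yields
\[
\int_0^1 \tp_D(t,x,y)\,dt \,\le\, c\int_0^\infty p(t,x,y)\,dt \,=\, c\,\mathcal{A}_{d,\alpha}\,|x-y|^{\alpha-d}.
\]

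For the long-time piece I first extract exponential decay of $\tPP^x(\tau_D>t)$. Integrating the density from Lemma~\ref{lem:lsgp} over those $z$ with $|z-y|>\diam(D)$ (which forces $z\in D^c$ when $y\in D$) bounds the density of $\tau_D$ from below by $\lambda\cdot\tPP^x(\tau_D>u)$, where $\lambda=\int_{|w|>\diam(D)}\nu(w)\,dw>0$. Setting $f(u)=\tPP^x(\tau_D>u)$, this reads $-f'\ge\lambda f$, hence $f(u)\le e^{-\lambda u}$. The Chapman--Kolmogorov identity for $\tp_D$ (a standard consequence of Hunt's formula and the strong Markov property of $\tPP$) combined with (\ref{ptxy_comp}) and (\ref{eq:oppt}) then gives, for $t\ge 1$,
\[
\tp_D(t,x,y) \,=\, \int_D \tp_D(t-1,x,z)\tp_D(1,z,y)\,dz \,\le\, M\,\tPP^x(\tau_D>t-1) \,\le\, Me^{\lambda}e^{-\lambda t},
\]
with $M=\sup_{z}\tp(1,z,y)$ bounded by a constant depending on $d,\alpha$ only. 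Integrating over $t\ge 1$ produces a constant depending only on $d,\alpha,\diam(D)$, which is absorbed into $|x-y|^{\alpha-d}$ using $|x-y|\le\diam(D)$.

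Joint continuity of $\tG_D$ off the diagonal follows from joint continuity of $\tp_D(t,\cdot,\cdot)$ for $t>0$ (inherited from $\tp$ via Hunt's formula) together with dominated convergence, the two bounds above furnishing a uniform majorant on compact subsets of $\{x\ne y\}$. For $x\in D$, Hunt's formula gives $\tp_D(t,x,x)\ge c^{-1}p(t,x,x)-\tEE^x[\tau_D<t;\,\tp(t-\tau_D,X_{\tau_D},x)]$; the first term behaves like $t^{-d/\alpha}$ as $t\to 0$, which is non-integrable since $d/\alpha>1$, while the subtracted term stays $O(t)$ because $X_{\tau_D}$ sits at distance $\ge\delta_D(x)>0$ from $x$ and $\tp$ then decays like $t/|\cdot|^{d+\alpha}$ by (\ref{ptxy_comp})--(\ref{eq:oppt}). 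Hence $\tG_D(x,x)=\infty$.

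The principal obstacle is the exit-time bound. Lemma~\ref{lem:lsgp} describes only \emph{jump} exits, whereas the drift $b$ creates a continuous component in the paths of $X$ under $\tPP^x$; one must verify that continuous exits only add to (rather than subtract from) the effective exit intensity, so that $-f'\ge\lambda f$ remains valid. Beyond this, the remaining steps are either soft continuity arguments or short-time consequences of the comparison (\ref{ptxy_comp}).
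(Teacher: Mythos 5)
Your argument is essentially the paper's: split at $t=1$, bound the short-time piece by $\tp_D\le\tp\le c\,p$ and the Riesz potential, get exponential decay of $\tPP^x(\tau_D>t)$ from Lemma~\ref{lem:lsgp}, and push that through the semigroup property for $t\ge1$. The concern you flag at the end is a reasonable one to raise, but it is not actually an obstacle: you never need $-f'(u)$ to \emph{equal} the jump-exit intensity, only to dominate it, and that is automatic. Integrating the Ikeda--Watanabe-type density of Lemma~\ref{lem:lsgp} over $z\in(\overline D)^c$ and $u\in(s,t]$ gives
\begin{equation*}
f(s)-f(t)\;\ge\;\tPP^x\bigl(s<\tau_D\le t,\;X_{\tau_D}\in(\overline D)^c\bigr)
=\int_s^t\!\int_D\tp_D(u,x,y)\,\kappa_D(y)\,dy\,du\;\ge\;\lambda\int_s^t f(u)\,du,
\end{equation*}
where $\kappa_D(y)=\int_{D^c}\nu(z-y)\,dz\ge\lambda$ for $y\in D$; any putative continuous-exit mass of $\tau_D$ only enlarges the left-hand side. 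A Gronwall argument then yields $f(t)\le e^{-\lambda t}$ with no need to know whether $\tPP^x(X_{\tau_D}\in\partial D)=0$ --- which, incidentally, the paper itself only establishes later, in Lemma~\ref{l:nu}, so the display $-F'(t)=\int_D\tp_D\,\kappa_D$ in the paper's proof is best read as exactly this one-sided inequality. With that settled, the rest of your proposal (Chapman--Kolmogorov via Hunt's formula, dominated convergence for joint continuity, and the short-time lower bound on $\tp_D(t,x,x)$ for $\tG_D(x,x)=\infty$) is correct and matches the paper's route, modulo the fact that the paper does not spell out the diagonal blow-up, which you did correctly.
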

\begin{proof}
We claim that there are constants $c$ and $C$ such that
\begin{equation}\label{eq:eotp}
\tp_D(t,x,y) \le Ce^{-ct}\,,\quad t>1, \quad x,y \in
  \RR^d\,.
\end{equation}
Indeed, let $\kappa_D(y)=\int_{D^c}\nu(z-y) dz$, so that $\kappa_D(y)\geq c>0$ for $y\in D$. Let $x\in D$, $t\geq 0$, and 
$F(t)=\tPP^x(\tau_D>t)=
\int_D \tp_D(t,x,y)dy$. 
By Lemma~\ref{lem:lsgp},
$$
-F'(t)= \int_{D} \tp_D(t,x,y)\kappa_D(y)dy\geq cF(t)\,,
$$
hence $\tPP^x(\tau_D > t) \le e^{-ct}$. 
By the semigroup property and (\ref{ptxy_comp}),  for $t>1$,
  \begin{align*}
    \tp_D(t,x,y) &\leq \int_D \tp_D(t-1,x,z)\tp(1,z,y) \,dz \\
    &\le c_1\int_D \tp_D(t-1,x,z)p(1,z,y) \,dz\\
    &\le c_1 p(1,0,0)\, \tPP^x(\tau_D > t-1) \le C e^{-c t}\,.
  \end{align*}
By (\ref{eq:deftG}),   (\ref{ptxy_comp}) and (\ref{eq:eotp}) we obtain
  \begin{align*}
    \tG_D(x,y)
    & \le \int_0^1 c_1 p(t,x,y)\,dt + \int_1^\infty Ce^{-ct}\, dt \\
    & \le \mathcal{A}_{d,\alpha} |x-y|^{\alpha-d} + C/c \le \left(\mathcal{A}_{d,\alpha} + C\diam(D)^{d-\alpha}/c\right)|x-y|^{\alpha-d}\,.
  \end{align*}
By (\ref{eq:deftG}) and dominated convergence theorem $\tG_D(x,y)$ is continuous if $x\neq y$, see (\ref{ptxy_comp}) and (\ref{eq:oppt}).
\end{proof}

The next lemma results from integrating (\ref{tp_D_fs}) against time.
\begin{lem}\label{lem:tgi}
For all $\varphi \in C_c^\infty(D)$ and $x\in D$ we have 
\begin{equation}\label{tG_D_fs}
\int_{D} \tG_D(x,z) \left(\Delta^{\alpha/2}\varphi(z) + b(z) \cdot \nabla \varphi(z) \right)\,dz = - \varphi(x)\,.
\end{equation}
\end{lem}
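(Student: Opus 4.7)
The plan is to insert into the weak equation (\ref{tp_D_fs}) a sequence of test functions of the form $\phi_n(u,z)=\eta_n(u)\varphi(z)$, where $\{\eta_n\}\subset C_c^\infty(0,\infty)$ is a family of temporal cutoffs approximating $\mathbf{1}_{(0,\infty)}$, and then let $n\to\infty$. Fix once and for all $s_0>0$, and choose $\eta_n$ so that $\eta_n(s_0)=1$, $\eta_n\equiv 1$ on $[s_0,n]$, $\eta_n$ is supported in $[s_0/2,2n]$, $0\le\eta_n\le 1$, and $|\eta_n'|\le C/n$ on $[n,2n]$. Then $\phi_n\in C_c^\infty\bigl((0,\infty)\times D\bigr)$ and Lemma~\ref{l:wkp} gives
\begin{equation*}
-\varphi(x)=\int_{s_0}^\infty\!\!\int_D \tp_D(u-s_0,x,z)\bigl[\eta_n'(u)\varphi(z)+\eta_n(u)(\Delta^{\alpha/2}\varphi(z)+b(z)\cdot\nabla\varphi(z))\bigr]dz\,du.
\end{equation*}
Changing variables $v=u-s_0$, I split the right-hand side as $A_n+B_n$ and treat each limit separately.

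For $B_n$, the non-derivative piece, $\eta_n(v+s_0)\to 1$ pointwise on $(0,\infty)$, so dominated convergence combined with Fubini yields
\begin{equation*}
B_n\longrightarrow \int_0^\infty\!\!\int_D \tp_D(v,x,z)\bigl(\Delta^{\alpha/2}\varphi+b\cdot\nabla\varphi\bigr)(z)\,dz\,dv=\int_D \tG_D(x,z)\bigl(\Delta^{\alpha/2}\varphi+b\cdot\nabla\varphi\bigr)(z)\,dz,
\end{equation*}
by (\ref{eq:deftG}). To justify the integrability demanded by dominated convergence and Fubini, I use $\tG_D(x,z)\le C_0|x-z|^{\alpha-d}$ from Lemma~\ref{lem:GLDupper}, the boundedness of $\Delta^{\alpha/2}\varphi$, and the compact support of $\nabla\varphi$ inside $D$; the Kato-class condition (\ref{eq:Kc}), which controls $|b(z)||x-z|^{\alpha-1-d}$, a fortiori controls the less singular $|b(z)||x-z|^{\alpha-d}$, giving local integrability of the drift term.

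For $A_n$, the support of $v\mapsto\eta_n'(v+s_0)$ inside $[0,\infty)$ lies in $[n-s_0,2n-s_0]$ with $|\eta_n'|\le C/n$. Using the exponential bound $\int_D \tp_D(v,x,z)\,dz=\tPP^x(\tau_D>v)\le e^{-cv}$ already established inside the proof of Lemma~\ref{lem:GLDupper}, I get
\begin{equation*}
|A_n|\le C\,\|\varphi\|_\infty\cdot\frac{1}{n}\int_{n-s_0}^{2n-s_0}e^{-cv}\,dv\longrightarrow 0.
\end{equation*}
Combining $A_n\to 0$ with the limit of $B_n$ produces (\ref{tG_D_fs}).

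The main technical point is the $b\cdot\nabla\varphi$ contribution to $B_n$: one must confirm that $\tG_D(x,\cdot)\,b(\cdot)\cdot\nabla\varphi(\cdot)$ is absolutely integrable on $D$, which is precisely where the Kato-class hypothesis is needed. Once that is in hand, everything else is a routine dominated-convergence argument together with the exponential tail bound on $\tPP^x(\tau_D>t)$ extracted from the previous lemma.
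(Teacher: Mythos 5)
Your proof is correct and takes essentially the same route as the paper: insert a time-cutoff test function $\phi_n(u,z)=\eta_n(u)\varphi(z)$ into \eqref{tp_D_fs}, then let $n\to\infty$ using the $\tG_D$ bound from Lemma~\ref{lem:GLDupper} and dominated convergence. The only deviations are cosmetic---your plateau cutoff fixed at $s_0>0$ sidesteps the paper's implicit use of $s=0$ in Lemma~\ref{l:wkp} (stated there only for $s>0$), and your explicit $A_n/B_n$ split with the exponential tail of $\tPP^x(\tau_D>t)$ is a slightly more granular way of accomplishing the same dominated-convergence limit.
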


The definition of $L$-harmonicity is analogous to that of $\alpha$-harmonicity.
\begin{df}
$u$ is $L$-harmonic on $U$ if for each open bounded  
$V\subset\overline{V} \subset U$,
$$
u(x) = \tEE^xu(X_{\tau_V}), \quad x \in V.
$$
We say that $u$ is
regular $L$-harmonic on $U$ if also
$$
u(x) = \tEE^xu(X_{\tau_U})
\,, \quad x \in U\,.
$$
\end{df}
Here $\tEE^xu(X_{\tau_U})=\tEE^x[\tau_U<\infty;\, u(X_{\tau_U})]$ and we  always assume absolute integrability.
In particular, $x\mapsto \tG_D(x,y)$ is $L$-harmonic in $D\setminus\{y\}$, in fact
$\tG_D(x,y)=\tG_U(x,y)+\tEE^x \tG_V(X(\tau_U),y)$ for every open $U\subset D$.
We should note that in general $\tG_D(x,y)\neq \tG_D(y,x)$ (non-symmetry), and $y\mapsto \tG_D(x,y)$ is {\it not} $L$-harmonic. This accounts in part for the difficulties in estimating $\tG_D$.

\section{Perturbation formula}\label{s:3}
As before, $D$ is a bounded $C^{1,1}$ open set in $\Rd$, $b\in \pK_d^{\alpha-1}$ and $1<\alpha<2\leq d$. Let
$$\mbox{$\GD=G_D$, $\tGD=\tG_D$ and $\deltaD=\delta_D$.}$$
In view of (\ref{eq:GradEstimGreen}) the next lemma yields uniform integrability of $b(z)\cdot \nabla_z G(y,z)$.
In particular, the singularity $\delta(z)^{\alpha/2-1}$ of $\nabla_z \GD$  at $\partial D$ integrates against $|b|$.
 \begin{lem}\label{lem:UnifInt}
$\GD(y,z)/[\delta(z)\land |y-z|]$ is uniformly in $y$ 
  integrable against $|b(z)| dz$.
\end{lem}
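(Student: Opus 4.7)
The plan is to dominate $f(y,z):=G_D(y,z)/[\delta(z)\wedge|y-z|]$ pointwise by an explicit kernel drawn from the sharp Green function estimate, and then verify uniform-in-$y$ integrability against $|b(z)|\,dz$ by splitting the integration into two regions.

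\textbf{Step 1 (pointwise domination).} Using the equivalent form (\ref{GreenEstimatesTJ}) of (\ref{GreenEstimates}) together with the triangle inequality $\delta(y)\leq\delta(z)+|y-z|$ and the identity (\ref{eq:max}), a short case analysis on which of $\delta(y),|y-z|,\delta(z)$ realizes the maximum in the denominator of (\ref{GreenEstimatesTJ}) yields
\[
f(y,z)\leq C\Bigl[\,|y-z|^{\alpha-1-d}\mathbf{1}_{|y-z|\leq\delta(z)}+|y-z|^{\alpha/2-d}\delta(z)^{\alpha/2-1}\mathbf{1}_{|y-z|>\delta(z)}\Bigr],
\]
where $C$ depends only on $d,\alpha$ and the distortion of $D$.

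\textbf{Step 2 (diagonal region).} On $\{|y-z|\leq\delta(z)\}$ the bound $|y-z|^{\alpha-1-d}$ is exactly the Kato kernel of (\ref{eq:Kc}), so $b\in\pK_d^{\alpha-1}$ gives both uniform finiteness of $\int|b(z)||y-z|^{\alpha-1-d}dz$ and uniform-in-$y$ smallness on $\{|y-z|<\eps\}$ as $\eps\downarrow 0$, handled by splitting off the complementary set on which $|y-z|^{\alpha-1-d}\leq\eps^{\alpha-1-d}$ is bounded and $|b|$ is (locally) integrable.

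\textbf{Step 3 (boundary-layer region).} On $\{|y-z|>\delta(z)\}$ I would rewrite
\[
|y-z|^{\alpha/2-d}\delta(z)^{\alpha/2-1}=|y-z|^{\alpha-1-d}\bigl(|y-z|/\delta(z)\bigr)^{1-\alpha/2},
\]
and separate $D$ into a bulk $\{\delta(z)\geq\eta\}$ and a boundary layer $\{\delta(z)<\eta\}$ for small $\eta\leq r_0(D)$. On the bulk the extra factor is bounded by $(\mathrm{diam}(D)/\eta)^{1-\alpha/2}$, reducing the contribution to a Kato integral. On the layer I would exploit the $C^{1,1}$ inner-normal parametrization from Lemma~\ref{l:loc} to write $z=q-sn_q$ with $s=\delta(z)\in(0,\eta)$, $q\in\partial D$, and $dz=J(q,s)\,d\sigma(q)\,ds$ with $J$ uniformly bounded. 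Fubini converts the integral into
\[
\int_0^\eta s^{\alpha/2-1}\!\int_{\partial D}|b(q-sn_q)|\,|y-(q-sn_q)|^{\alpha/2-d}J(q,s)\,d\sigma(q)\,ds,
\]
which I would dominate uniformly in $y$ by the Kato condition (using a tubular-neighborhood covering and $\pK_d^{\alpha-1}\subset\pK_d^\alpha$) times the integrable factor $s^{\alpha/2-1}$, since $\alpha/2-1>-1$. Sending $\eta\downarrow 0$ gives the desired uniform smallness.

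\textbf{Main obstacle.} The crux is the boundary-layer part of Step 3: the two singularities $\delta(z)^{\alpha/2-1}$ (at $\partial D$) and $|y-z|^{\alpha/2-d}$ (at the diagonal) sit at different places but interact, and a naive Young-type split of the product would produce the non-integrable weight $\delta(z)^{\alpha-1-d}$. The $C^{1,1}$ geometry of $\partial D$ is what decouples these singularities and keeps the exponents integrable; this is where Lemma~\ref{l:loc} and the inequality $\alpha/2-1>-1$ are essential.
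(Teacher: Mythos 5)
Your Steps 1 and 2 are correct and agree with the paper: the bound
\[
f(y,z)\lesssim |y-z|^{\alpha-1-d}\mathbf{1}_{\{|y-z|\leq\delta(z)\}}+|y-z|^{\alpha/2-d}\delta(z)^{\alpha/2-1}\mathbf{1}_{\{|y-z|>\delta(z)\}}
\]
follows from (\ref{GreenEstimatesTJ}) (indeed, in the second case the maximum dominates both $|y-z|$ and $\delta(y)$, so $[\cdots]^\alpha\geq |y-z|^{\alpha/2}\delta(y)^{\alpha/2}$), and the diagonal region is exactly the Kato kernel.

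The gap is in Step 3, and it is structural. The Kato condition (\ref{eq:Kc}) controls only full-dimensional integrals of $|b|$; in particular it yields $\int_{B(x,r)}|b(z)|\,dz\leq K_r\,r^{d+1-\alpha}$ for balls, but it gives no control whatsoever over restrictions of $|b|$ to $(d-1)$-dimensional level sets $\{\delta(z)=s\}$. After your Fubini, the inner integral $\int_{\partial D}|b(q-sn_q)|\,|y-(q-sn_q)|^{\alpha/2-d}J\,d\sigma(q)$ is a surface integral of $|b|$ against a kernel, and there is no way to bound it (let alone uniformly in $y$ and $s$) from the hypothesis $b\in\pK_d^{\alpha-1}$: a Kato-class $b$ need not have a trace on hypersurfaces, and even if it does, nothing bounds it. Note also that even with $|b|\equiv 1$ the surface integral of $|y-z|^{\alpha/2-d}$ over a slice passing near $y$ diverges for $\alpha<2$; the constraint $|y-z|>s$ rescues finiteness but only produces the non-integrable factor $s^{\alpha/2-d}$ in the outer $s$-integral, which is precisely the failure you flag as the ``naive Young-type split.'' So the tubular-coordinate Fubini does not decouple the two singularities in a way the Kato hypothesis can absorb.

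The paper's own argument never passes to surface measure. Instead it stays with $d$-dimensional integrals: it partitions the boundary layer into doubly-dyadic annuli $W^m_{n,k}(y)$ (dyadic in $\delta(z)$ and in $|y-z|$), covers each $W^m_{n,k}(y)$ by $\approx (k+1)^{d-2}m^{d-1}2^{n(d-1)}$ balls of radius $\delta(y)/(m2^n)$ --- this ball count is where the $C^{1,1}$ geometry enters, replacing your tubular parametrization --- and applies the ball estimate $\int_{B}|b|\,dz\leq K_r\,r^{d+1-\alpha}$ on each. The resulting double geometric series in $n,k$ converges exactly because $\alpha-2<0$ and $d\geq 2$. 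That volume-covering mechanism is the missing idea; the Fubini-into-slices approach cannot be repaired without strengthening the hypothesis on $b$.
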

\begin{proof}
In view of   (\ref{GreenEstimatesTJ}) it is enough to prove the uniform integrability of
$$
H(y,z)=
|y-z|^{\alpha-d}\frac{\delta(y)^{\alpha/2}\delta(z)^{\alpha/2}}
{[\delta(z)\vee |y-z|\vee\delta(y)]^\alpha}
\frac{1}{\delta(z)\land |y-z|}\,.
$$
Let $A_R(y) = \{z \in D \colon
H(y,z) > R\}$ for $R>0$. We will verify that
$$
\lim_{R \to \infty}\sup_{y \in D} \int_{A_R(y)} H(y,z)
|b(z)|\,dz =0\,.
$$ 
For $r>0$ we denote 
$$
K_r=\sup_{x\in \Rd} \int_{B(x,r)}|b(y)||x-y|^{\alpha-1-d}dy\,.
$$
By (\ref{eq:Kc}) we have that $K_r<\infty$ and $K_r \downarrow 0$ as $r \downarrow 0$. 
For all $x \in \RR^d$ and $r>0$,
$$
\int_{B(x,r)} |b(z)|\,dz \le r^{d+1-\alpha} \int_{B(x,r)}
|x-z|^{\alpha-1-d}|b(z)|\,dz \le K_r r^{d+1-\alpha}\,.
$$
For $r>0$ we let
$D(r)=\{z\in D:\,\delta(z)>r\}$. If $m>0$, $y\in D$ and $z\in D(\delta(y)/m)$, that is $\delta(z)>\delta(y)/m$, then we have
$$|H(y,z)| \le m^{1-\alpha/2}
|y-z|^{\alpha-1-d}\,.
$$ 
Indeed, by (\ref{eq:max}),
\begin{eqnarray*}
H(y,z)&=&
|y-z|^{\alpha-d}\frac{\delta(y)^{\alpha/2}\delta(z)^{\alpha/2}}
{[\delta(z)\vee |y-z|\vee\delta(y)]^\alpha}
\frac{\delta(z)\vee |y-z|}{\delta(z)|y-z|}\\
&\leq&
|y-z|^{\alpha-1-d}\delta(z)^{\alpha/2-1}\delta(y)^{\alpha/2}
[\delta(z)\vee |y-z|\vee \delta(y)]^{1-\alpha}
\\
&\leq&
|y-z|^{\alpha-1-d}\delta(z)^{\alpha/2-1}\delta(y)^{1-\alpha/2}
\leq
m^{1-\alpha/2}|y-z|^{\alpha-1-d}\,.
\end{eqnarray*}
If $R \to \infty$, then uniformly in $y$ we have
\begin{align}
  &\int_{D({\delta(y)}/m) \cap A_R(y)}  H(y,z) |b(z)|\,dz  \nonumber\\
  &\le m^{1-\alpha/2}\int_{\{z \in D \colon |y-z|^{\alpha-1-d} >
    Rm^{\alpha/2-1}\}} |y-z|^{\alpha-1-d} |b(z)|\,dz \to
  0\,. \label{eq:unif1}
\end{align}
For $y\in D$,  $k,n \ge 0$ and $m \ge 2$ we consider
$$
W^m_{n,k}(y) = \{z \in D\colon
\frac{\delta(y)}{m2^{n+1}} < \delta(z) \le \frac{\delta(y)}{m2^{n}}
,\; k \delta(y) < |y-z|\le (k+1) \delta(y) \}.
$$
$W^m_{n,k}(y)$ may be covered by $c_1 (k + 1)^{d-2} m^{d-1}2^{n(d-1)}$ balls of
radii $\frac{\delta(y)}{m2^n}$, thus
\begin{align*}
  \int_{W^m_{n,k}(y)}|b(z)|\,dz & \le c_1 (k + 1)^{d-2} m^{d-1}2^{n(d-1)} \sup_{x \in \RR^d} \int_{B(x,\delta(y)/m2^n)} |b(z)|\,dz \\
  & \le c_1 K_{\delta(y)/m2^n} (k + 1)^{d-2} m^{d-1}2^{n(d-1)} \left(\frac{\delta(y)}{m2^n}\right)^{d+1-\alpha}\\
  & = c_1 K_{\delta(y)/m2^n} (k + 1)^{d-2} m^{\alpha-2}
  2^{n(\alpha-2)}\delta(y)^{d+1-\alpha}\,.
\end{align*}
For $z\in W^m_{n,k}(y)$ we have $\delta(y)\geq 2\delta(z)$, hence $|y-z|\ge \delta(y)/2$ and $|y-z|\ge\delta(z)$. We obtain
\begin{align*}
  &\int\limits_{A_R(y) \setminus D({\delta(y)}/m)} H(y,z) |b(z)|\,dz 
\le \sum_{n=0}^\infty\sum_{k=0}^{\infty}\int\limits_{W^m_{n,k}(y)} \frac{\delta(y)^{\alpha/2}}{|y-z|^d \delta(z)^{1-\alpha/2}}|b(z)|\,dz\\
  &\le  \sum_{n=0}^\infty\sum_{k=0}^{\infty}\int_{W^m_{n,k}(y)} \frac{\delta(y)^{\alpha/2}}{\big((k+1)\delta(y)/2\big)^{d} [\delta(y)/(m2^{n+1})]^{1-\alpha/2}}|b(z)|\,dz\\
  & \le c_2K_{\delta(y)/m}\sum_{n=0}^\infty\sum_{k=0}^{\infty}(k + 1)^{d-2} m^{\alpha-2}2^{n(\alpha-2)}
  2^d(k+1)^{-d}2^{n(1-\alpha/2)}m^{1-\alpha/2}\\
& \le
  c_3m^{\alpha/2-1}K_{\delta(y)/m}\,.
\end{align*}
Let $\varepsilon >0$. We chose $m$  and $R$ so large that $c_3m^{\alpha/2-1}K_{\diam(D)/m} <
\varepsilon/2 $ and
$$
\sup_{y \in D}\int_{D({\delta(y)/m}) \cap A_R(y)} H(y,z)
|b(z)|\,dz <\varepsilon/2\,.
$$
This completes the proof.
\end{proof}

We consider the operator $\Ab$:
$$
(\Ab \phi)(x) = b(x)\cdot \nabla\phi(x) 
= \sum_{i=1}^d b_i(x) \frac{\partial \phi(x)}{\partial x_i}.
$$ 
We will  study the perturbation series
$\sum_{n=0}^\infty
(G \Ab )^n G$ of integral operators on $\mathcal{K}_d^{\alpha-1}$.
Namely we will apply $G\Ab G$ to real-valued $f\in \pK_d^{\alpha-1}$:
\begin{eqnarray}
  G\Ab Gf(x) & = & 
  \int_D G(x,z) b(z) \cdot \nabla_z \int_D G(z,y)f(y)\,dy\,dz\,. \label{operatorGD_bG}
\end{eqnarray}
We will need to interchange the integration and  differentiation in (\ref{operatorGD_bG}).
\begin{lem}\label{GradGreen}           
  Let $1<\alpha<2$. If $f \in \mathcal{K}_d^{\alpha-1}$ or at least $G(z,y)/|y-z|$ is locally in $z\in D$ uniformly integrable against $|f(y)|dy$, then
  \begin{equation}
    \nabla_z\int_D\GDD(z,y)f(y)\,dy = \int_D \nabla_z\,\GDD(z,y)f(y)\,dy\,,\quad z \in D\,.
  \end{equation}
\end{lem}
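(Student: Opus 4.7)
The plan is to apply a dominated convergence / Fubini argument to the difference quotients of $z \mapsto \int_D \GD(z,y) f(y)\,dy$. Fix $z_0 \in D$ and choose $\rho \in (0, \deltaD(z_0)/4)$. For $z \in B(z_0, \rho)$ and for $y \in D$ not lying on the segment $[z_0, z]$, the smoothness of $\GD(\cdot, y)$ away from $y$ gives, by the fundamental theorem of calculus,
\begin{equation*}
\GD(z, y) - \GD(z_0, y) = \int_0^1 (z - z_0) \cdot \nabla_\xi \GD\bigl(z_0 + t(z - z_0), y\bigr)\,dt.
\end{equation*}
The idea is to multiply by $f(y)$, integrate over $D$, swap the $t$- and $y$-integrations by Fubini, and then pass to the limit $z \to z_0$.

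The key estimate is (\ref{eq:GradEstimGreen}), which yields, for $\xi \in B(z_0, \rho)$ (so that $\deltaD(\xi) \geq 3\rho$ is bounded below),
\begin{equation*}
|\nabla_\xi \GD(\xi, y)| \leq d\,\frac{\GD(\xi, y)}{\deltaD(\xi) \wedge |\xi - y|}.
\end{equation*}
Under the hypothesis of the lemma --- directly in the first case, or via Lemma~\ref{lem:UnifInt} combined with (\ref{GreenEstimatesTJ}) in the case $f \in \pK_d^{\alpha-1}$ --- the quantity $\GD(\xi, y)/(\deltaD(\xi) \wedge |\xi - y|)$ is, locally in $\xi$, uniformly integrable against $|f(y)|\,dy$; in particular the function $\xi \mapsto \int_D |\nabla_\xi \GD(\xi, y)|\,|f(y)|\,dy$ is bounded on $B(z_0, \rho)$. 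This uniform bound supplies the absolute integrability needed to apply Fubini to
\begin{equation*}
\int_D f(y) \int_0^1 (z - z_0) \cdot \nabla_\xi \GD\bigl(z_0 + t(z - z_0), y\bigr)\,dt\,dy.
\end{equation*}

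After exchanging the order of integration we obtain
\begin{equation*}
\int_D \GD(z,y) f(y)\,dy - \int_D \GD(z_0, y) f(y)\,dy = \int_0^1 (z - z_0) \cdot h\bigl(z_0 + t(z - z_0)\bigr)\,dt,
\end{equation*}
where $h(\xi) := \int_D \nabla_\xi \GD(\xi, y) f(y)\,dy$. A second dominated convergence argument, invoking the joint continuity of $\nabla_\xi \GD(\xi, y)$ at $\xi \neq y$ (recorded after (\ref{eq:GradEstimGreen})) together with the same uniform integrability majorant, shows that $h$ is continuous at $z_0$. Dividing the above identity by $|z - z_0|$ and letting $z \to z_0$ along a coordinate direction yields the claimed interchange of differentiation and integration. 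The main delicacy is that the singularity of $|\nabla_\xi \GD(\xi, y)|$ at $\xi = y$ moves with $\xi$, so no pointwise integrable majorant is available on $B(z_0, \rho)$; the uniform integrability hypothesis --- which is exactly the content of Lemma~\ref{lem:UnifInt} in the Kato case --- is precisely what overcomes this obstacle.
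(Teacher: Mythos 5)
Your proof is correct, but it takes a genuinely different route from the paper's. The paper simply cites \cite[Lemma 5.2]{BKN} for $f\in\pK_d^{\alpha-1}$, and for the more general hypothesis truncates, setting $f_n=f\land n\lor(-n)\in\pK_d^{\alpha-1}$ (bounded functions are in the Kato class), applies the cited result to each $f_n$, and passes to the limit using local uniform integrability and the joint continuity of $G$ and $\nabla_z G$. You instead re-derive the interchange from scratch: a fundamental-theorem-of-calculus identity for the difference quotients, a Fubini swap justified by a local uniform bound on $\int_D|\nabla_\xi G(\xi,y)||f(y)|\,dy$, and a dominated-convergence limit using continuity of $\xi\mapsto\int_D\nabla_\xi G(\xi,y)f(y)\,dy$. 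This is self-contained and treats both hypotheses in one stroke, at the cost of being longer than the paper's two-line argument. One small imprecision: the quantity you need uniformly integrable is $G(\xi,y)/(\delta(\xi)\land|\xi-y|)$ against $|f(y)|\,dy$ with $\xi$ parametric, whereas Lemma~\ref{lem:UnifInt} concerns $G(y,z)/(\delta(z)\land|y-z|)$ against $|b(z)|\,dz$ with $y$ parametric --- there the denominator carries the distance to $\partial D$ of the \emph{integration} variable, not the parametric one, so that lemma does not apply literally. In your setting this is harmless and in fact a red herring: since $\delta(\xi)\geq3\rho$ on $B(z_0,\rho)$, the boundary singularity is absent and the only singularity is at $y=\xi$; the Riesz bound $G(\xi,y)\le\mathcal{A}_{d,\alpha}|\xi-y|^{\alpha-d}$ together with the Kato condition (\ref{eq:Kc}) (or, under the second hypothesis, the assumed local uniform integrability of $G(\xi,y)/|\xi-y|$) supplies the needed absolute continuity of the integrals directly, without invoking Lemma~\ref{lem:UnifInt} at all.
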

\begin{proof}
 The result is proved for $f\in \pK^{\alpha-1}_d$ in \cite[Lemma 5.2]{BKN}. For the more general $f$ we note that $G(z,y)$ and $\nabla_z G(z,y)$  are continuous on $D\times D$ except at $z=y$, see the remark following (\ref{eq:GradEstimGreen}). They are also uniformly integrable against $|f(y)|dy$ for $z$ in compact subsets of $D$. In consequence $g(z)=Gf(z)$ and $k(z)= \int_D \nabla_z\,\GDD(z,y)f(y)\,dy$ are continuous on $D$.
We consider $f_n=f \land n \lor (-n)\in \pK^{\alpha-1}_d$, $g_n=Gf_n$.
We have $g_n\to g$ and $\nabla g_n\to k$. It follows that $\nabla g=k$.
\end{proof}

For $x,y\in D$ we let
\begin{align}
   & {\mk}(x,y)=\int_D  |b(z)|\frac{G(x,z)G(z,y)}{G(x,y)(\deltaDD(z) \land |y-z|)} \,dz\,, \label{3GIntegral} \\
   & {\mgk}(x,y)=\int_D  |b(z)|\frac{G(x,z)G(z,y)(\deltaDD(x) \land |x-y|)}{G(x,y)(\deltaDD(z) \land |y-z|)(\deltaDD(x) \land |x-z|)} \,dz\,. \label{nabla3GIntegral}
\end{align}
In what follows $\mk$ and $\mgk$  will serve as majorants for the perturbation series.
\begin{lem}\label{Lem3GIntegral}
Let $\lambda, r<\infty$. There is
  $\CI=\CI(d,\alpha,b,\lambda,r)$ such that if $D$ is $C^{1,1}$,
${\rm diam}(D)/r_0(D)\le\lambda$ and $\diam(D)\le r$, then 
${\mk}(x,y) \le \CI$, ${\mgk}(x,y) \le 2\CI$ for $x,y \in D$, 
and $\CI(d,\alpha,b,\lambda,r)\to 0$ as $r\to 0$.
\end{lem}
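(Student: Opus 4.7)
The plan is to majorize $\mk(x,y)$ and $\mgk(x,y)$ by integrals of the form controlled by Lemma~\ref{lem:UnifInt}, using a pointwise 3G-type inequality as the main technical device. The Kato condition (\ref{eq:Kc}) will then deliver the smallness of $\CI$ as $r\to 0$.

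First I would derive, from the approximate factorization (\ref{GreenEstimatesTJ}) of $G$, a 3G-type inequality of the form
\begin{equation*}
\frac{G(x,z)\,G(z,y)}{G(x,y)} \le C\,\bigl[G(x,z) + G(z,y)\bigr],\qquad x,y,z\in D,
\end{equation*}
with $C = C(d,\alpha,\lambda)$. The triangle inequality forces $|x-z|\ge|x-y|/2$ or $|z-y|\ge|x-y|/2$, so I would split into those two cases and in each case absorb the boundary factor $\delta(u)^{\alpha/2}\delta(v)^{\alpha/2}/[\delta(u)\vee|u-v|\vee\delta(v)]^\alpha$ by comparing $\delta(z)$ with $\delta(x)$, $\delta(y)$, and the mutual distances. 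Only the bound $\mathrm{diam}(D)/r_0(D)\le\lambda$ will be used, giving $C$ depending only on $d,\alpha,\lambda$.

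Substituting this 3G bound into (\ref{3GIntegral}) yields
\begin{equation*}
\mk(x,y) \le C \int_D |b(z)|\,\frac{G(x,z)}{\delta(z)\wedge|y-z|}\,dz + C\int_D |b(z)|\,\frac{G(z,y)}{\delta(z)\wedge|y-z|}\,dz.
\end{equation*}
The second integral is bounded uniformly by Lemma~\ref{lem:UnifInt}. For the first, I would split on whether $|x-z|\ge|y-z|$ (then $\delta(z)\wedge|y-z|$ exceeds a fixed multiple of $\delta(z)\wedge|x-z|$, reducing to Lemma~\ref{lem:UnifInt} applied at $x$), or $|x-z|<|y-z|$ (then $|y-z|$ is comparable to $|x-y|$, and a direct geometric estimate applies). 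For $\mgk$, the extra factor $(\delta(x)\wedge|x-y|)/(\delta(x)\wedge|x-z|)$ in (\ref{nabla3GIntegral}) is $\le 1$ when $|x-z|\ge|x-y|$; when $|x-z|<|x-y|$ one has $|z-y|\ge|x-y|/2$, so $|x-y|$ is comparable to $|z-y|$ and this term is absorbed into a $\mk$-type bound, at the cost of at most doubling the constant, giving $\mgk(x,y)\le 2\CI$. Inspection of the proof of Lemma~\ref{lem:UnifInt} shows the resulting majorants involve the Kato quantities $K_r=\sup_x\int_{B(x,r)}|b(z)||x-z|^{\alpha-1-d}\,dz$, which by (\ref{eq:Kc}) tend to $0$ as $r\to 0$; since $\diam(D)\le r$ this gives $\CI\to 0$ as $r\to 0$.

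The main obstacle is the 3G inequality with constants depending only on $d$, $\alpha$, and $\lambda$. The boundary factor in (\ref{GreenEstimatesTJ}) is non-monotone in the three points, and one must keep careful track of the regimes in which $z$ lies close to $x$, close to $y$, or in the bulk, all while avoiding any dependence on the localization radius $r_0(D)$ itself. A secondary difficulty is the asymmetry in the definitions of $\mk$ and $\mgk$: the denominator $\delta(z)\wedge|y-z|$ is tied to $y$, whereas Lemma~\ref{lem:UnifInt} applied at $x$ produces bounds in terms of $\delta(z)\wedge|x-z|$, so the geometric case splits above are needed to bridge the two.
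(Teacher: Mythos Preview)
Your proposed 3G inequality
\[
\frac{G(x,z)\,G(z,y)}{G(x,y)} \le C\bigl[G(x,z) + G(z,y)\bigr]
\]
is false in general for the Green function of $\Delta^{\alpha/2}$ on $C^{1,1}$ domains, even with bounded distortion. Take $D=B(0,2)$, let $z=0$, and pick $x,y\in D$ with $\delta(x)=\delta(y)=\epsilon$ and $|x-y|\approx|x-z|\approx|z-y|\approx 1$. By (\ref{GreenEstimates}) one has $G(x,z)\approx G(z,y)\approx \epsilon^{\alpha/2}$ and $G(x,y)\approx \epsilon^{\alpha}$, so the left side is $\approx 1$ while the right side is $\approx \epsilon^{\alpha/2}\to 0$. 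Your case split on $|x-z|\gtrless |x-y|/2$ cannot repair this: the obstruction is that $\delta(z)$ may be much larger than both $\delta(x)$ and $\delta(y)$, and then no rearrangement of the boundary factors in (\ref{GreenEstimatesTJ}) will produce $G(x,z)$ or $G(z,y)$ as a majorant.

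The paper avoids this by using the \emph{weighted} 3G theorem for $\mathcal G(x,y)=G(x,y)/[\delta(x)^{\alpha/2}\delta(y)^{\alpha/2}]$, namely $\mathcal G(x,z)\wedge\mathcal G(z,y)\le c\,\mathcal G(x,y)$ (cited from \cite{MR2160104}), which gives
\[
\frac{G(x,z)G(z,y)}{G(x,y)}\le c\left(\frac{\delta(z)^{\alpha/2}}{\delta(x)^{\alpha/2}}G(x,z)\ \vee\ \frac{\delta(z)^{\alpha/2}}{\delta(y)^{\alpha/2}}G(z,y)\right).
\]
The extra factor $\delta(z)^{\alpha/2}/\delta(x)^{\alpha/2}$ is exactly what compensates the counterexample above. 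One then shows, crucially using $\alpha\ge 1$, that $\frac{\delta(z)^{\alpha/2}}{\delta(x)^{\alpha/2}}G(x,z)\le c\,(\delta(z)\wedge|x-z|)\,|x-z|^{\alpha-1-d}$, which after dividing by $\delta(z)\wedge|y-z|\wedge|x-z|$ yields the clean bound $c(|x-z|^{\alpha-1-d}+|y-z|^{\alpha-1-d})$, integrable against $|b|$ directly by (\ref{eq:Kc}). This bypasses Lemma~\ref{lem:UnifInt} entirely for $\mk$, and also makes the $r\to 0$ smallness immediate. Your route through Lemma~\ref{lem:UnifInt} and geometric case splits would still need something like this weighted estimate to get off the ground.
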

\begin{proof} 
Denote $g(x) = \delta(x)^{\alpha/2}$. 
Let $x,z\in D$.
By  (\ref{GreenEstimatesTJ}) we have
\begin{align}
& \frac{g(z)}{g(x)}G(x,z)\approx \frac{g^2(z)}{(\delta(x)\lor |x-z|\lor \delta(z))^\alpha} 
|x-z|^{\alpha-d} \nonumber \\
& \le \left(\frac{\delta(z)}{\delta(z)\lor |x-z|}\right)^\alpha |x-z|^{\alpha-d} 
\le \frac{\delta(z)}{\delta(z)\lor |x-z|}|x-z|^{\alpha-d} \label{eq:alphag1}\\
&= \big( \delta(z) \land |x-z|\big)|x-z|^{\alpha-1-d}\,. \label{eq:ggG}
\end{align}
Let $\mathcal{G}(x,y)=G(x,y)/[g(x)g(y)]$. The so-called 3G Theorem 
holds for $\mathcal{G}$: 
$$
\mathcal{G}(x,z)\land \mathcal{G}(z,y)\leq c \mathcal{G}(x, y)\,,\quad x,y,z\in D\,,
$$
where $c $ depends only on $d$, $\alpha$ and the distortion of $D$ (\cite{MR2160104}).
We obtain
\begin{align}
  & \frac{G(x,z)G(z,y)}{G(x,y)} =g^2(z) \frac{\mathcal{G}(x,z)\mathcal{G}(z,y)}{\mathcal{G}(x, y)}
  \le c \left(\frac{g(z)}{g(x)}G(x,z) \;\lor \; \frac{g(z)}{g(y)}G(z,y)\right)  \nonumber\\
  &\le  c  \left(\frac{\delta(z) \land |x-z|}{|x-z|^{d+1-\alpha}} \lor  \frac{\delta(z) \land |y-z|}{|y-z|^{d+1-\alpha}} \right) \nonumber\\
  & =  c  \big( \delta(z) \land |x-z| \land |y-z| \big)
  \left(\frac{1}{|x-z|^{d+1-\alpha}} \lor  \frac{1}{|y-z|^{d+1-\alpha}}
  \right)\,. \label{eq:3G}
\end{align}
We have
\begin{equation*}
\frac{G(x,z)G(z,y)}{G(x,y)\big( \delta(z) \land |y-z| \land |x-z|\big)} \le
c  (|x-z|^{\alpha-1-d} + |y-z|^{\alpha-1-d})\,,
\end{equation*}
so we actually have uniform integrability against $|b(z)|dz$. The statement about $\kappa$ follows form (\ref{eq:Kc}).

To estimate ${\mgk}$ we consider two cases. If $2|x-z| > \delta(x) \land |x-y|$, then
$$ 
\frac{\delta(z) \land |x-z| \land |y-z|}{(\delta(z) \land |y-z|)(\delta(x)  \land |x-z|)} \le \frac{1}{\delta(x)  \land |x-z|} \le \frac{2}{\delta(x)  \land |x-y|}\,.
$$
If $2|x-z| \le \delta(x) \land |x-y|$, then $\delta(z) \ge \delta(x)/2$, $|y-z| \ge |x-y|/2$, and so
$$
\frac{\delta(z) \land|x-z|  \land |y-z|}{(\delta(z) \land |y-z|)(\delta(x)  \land |x-z|)} \le \frac{\delta(z) \land |x-z|  \land |y-z|}{(\delta(x)/2 \land |x-y|/2)|x-z|} \le \frac{2}{\delta(x)  \land |x-y|}\,.
$$
By (\ref{eq:3G}) and (\ref{eq:Kc}) we obtain ${\mgk}(x,y) \le 2\CI$. In fact we observe the uniform integrability against $|b(z)|dz$.
The above estimates of the factors in (\ref{3GIntegral})  and (\ref{nabla3GIntegral}) depend on $D$ only through $d$ and ${\rm diam}(D)/r_0(D)$ (\cite{MR1991120}).
Therefore the integrals in  (\ref{3GIntegral})  and (\ref{nabla3GIntegral}) are arbitrarily small if  ${\rm diam}(D)$ is small enough, and the distortion of $D$ is bounded by a constant. This follows from (\ref{eq:Kc}). If ${\rm diam}(D)$ is not small but finite then we only have the boundedness of ${\mk}$ and $\mgk$, which also follows from (\ref{eq:Kc}). 
\end{proof}
For $x\neq y$ we let
\begin{equation}\label{eq:defG1}
\kk_1(x,y) = \int_D G(x,z)
b(z) \cdot \nabla_z G(z,y) \,dz.
\end{equation}
By (\ref{eq:GradEstimGreen}), (\ref{3GIntegral}) and Lemma~\ref{Lem3GIntegral} the integral is absolutely convergent,
\begin{equation}\label{kxyEstimates}
  |\kk_1(x,y)| 
  \le d\, G(x,y) \int_D    \frac{|b(z)|G(x,z)G(z,y)}{G(x,y)(\deltaDD(z) \land |y-z|)} \,dz
\le dC_1 G(x,y)
\,. 
\end{equation}
For $f\in \pK_d^{\alpha-1}\subset \pK^\alpha_d$ we  have
\begin{eqnarray*}
&&\int_D  G(x,y) \int_D  |b(z)| \frac{G(x,z)G(z,y)}{G(x,y)(\deltaDD(z) \land |y-z|)} \,dz |f(y)|dy\\
  & \leq & C_1 \int_D G(x,y) |f(y)|dy<\infty \,,
  \end{eqnarray*}
hence by  Lemma~\ref{GradGreen}, (\ref{eq:GradEstimGreen}) and Fubini's theorem,
\begin{eqnarray*}
  G\Ab Gf(x) & = & \int_D G(x,z)  \int_D b(z) \cdot \nabla_z G(z,y)f(y)\,dy\,dz \\
  & = & \int_D \kk_1(x,y) f(y)\,dy\,.
\end{eqnarray*}
We like to note that the linear map $f\mapsto b\nabla Gf$ preserves $\pK^{\alpha-1}_d$ because
$\nabla Gf$ is a bounded function, see Lemma~\ref{lem:UnifInt} and the remarks following (\ref{eq:Kc}).

We will now prove the pointwise perturbation formula.
\begin{lem}\label{lem:pf}
Let $x,y\in \Rd$, $x\neq y$. We have
\begin{equation}\label{eq:wp}
\tGDD(x,y)=\GDD(x,y)+\int_D \tGDD(x,z)b(z)\cdot \nabla_z\GDD(z,y)dz\,.
\end{equation}
\end{lem}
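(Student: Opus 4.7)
The plan is to derive (\ref{eq:wp}) by combining the two weak fundamental-solution identities (\ref{tG_D_fs}) and (\ref{eq:fg}) to produce a distributional version of the perturbation formula, and then to upgrade to the pointwise statement via continuity.

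For every $\varphi\in C_c^\infty(D)$, subtracting (\ref{eq:fg}) from (\ref{tG_D_fs}) gives
\[
\int_D [\tG_D(x,z)-G_D(x,z)]\Delta^{\alpha/2}\varphi(z)\,dz = -\int_D \tG_D(x,z)\,b(z)\cdot\nabla\varphi(z)\,dz.
\]
I would then transfer the gradient off $\varphi$ and onto $G_D$: using (\ref{eq:fg}) and the symmetry of $G_D$, $\varphi(z)=-\int_D G_D(z,y)\Delta^{\alpha/2}\varphi(y)\,dy$, and since $\Delta^{\alpha/2}\varphi|_D$ is bounded on the bounded set $D$ (hence in $\pK_d^{\alpha-1}$), Lemma~\ref{GradGreen} yields $\nabla\varphi(z)=-\int_D\nabla_z G_D(z,y)\Delta^{\alpha/2}\varphi(y)\,dy$. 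Substituting and applying Fubini---whose absolute convergence I would justify using Lemma~\ref{lem:GLDupper}, the gradient bound (\ref{eq:GradEstimGreen}), and the uniform integrability supplied by Lemma~\ref{lem:UnifInt}---I arrive at
\[
\int_D [\tG_D(x,y)-G_D(x,y)-H(x,y)]\Delta^{\alpha/2}\varphi(y)\,dy=0,\qquad \varphi\in C_c^\infty(D),
\]
where $H(x,y):=\int_D \tG_D(x,z)b(z)\cdot\nabla_z G_D(z,y)\,dz$ is precisely the correction term appearing on the right of (\ref{eq:wp}).

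The remaining step is to upgrade this orthogonality to the pointwise identity. The function $F(y):=\tG_D(x,y)-G_D(x,y)-H(x,y)$ is continuous on $D\setminus\{x\}$ (by Lemma~\ref{lem:GLDupper} for $\tG_D$ and dominated convergence for $H$), so it would suffice to show that $\{\Delta^{\alpha/2}\varphi|_D\colon \varphi\in C_c^\infty(D)\}$ is sufficiently rich to test against a continuous function. The natural approach is to approximate any $\psi\in C_c^\infty(D)$ by $\Delta^{\alpha/2}\varphi_n$, with $\varphi_n\in C_c^\infty(D)$ a compactly supported smooth truncation of the Green potential $-G_D\psi$, and pass to the limit. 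This density/approximation argument is the principal technical difficulty, because $-G_D\psi$ itself is not compactly supported in $D$: it only decays as $\delta_D^{\alpha/2}$ at $\partial D$. A cleaner alternative that sidesteps the limit argument is to extend (\ref{tG_D_fs}) to admit such Green potentials as test functions; applying the extended identity to $\varphi=-G_D f$ for $f\in C_c^\infty(D)$ then produces the operator formula $\tG_D f=G_D f+\tG_D(b\cdot\nabla G_D f)$ in one step, whose kernel form after Fubini is (\ref{eq:wp}).
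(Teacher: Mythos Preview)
Your derivation of the distributional identity
\[
\int_D\big[\tG_D(x,y)-G_D(x,y)-H(x,y)\big]\,\Delta^{\alpha/2}\varphi(y)\,dy=0,\qquad \varphi\in C_c^\infty(D),
\]
is essentially the same as the paper's: the paper packages this as a linear functional $\Lambda(\phi)$ and checks $\Lambda(\Delta^{\alpha/2}\varphi)=0$ via exactly the ingredients you list (Lemma~\ref{lem:tgi}, (\ref{eq:fg}), Lemma~\ref{GradGreen}, and absolute convergence from Lemma~\ref{lem:UnifInt} and Lemma~\ref{lem:GLDupper}).

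The gap is in the upgrade from this orthogonality to the pointwise statement. Neither of your two suggestions is carried through, and both are genuinely delicate: approximating $\psi\in C_c^\infty(D)$ by $\Delta^{\alpha/2}\varphi_n$ with $\varphi_n\in C_c^\infty(D)$ runs into the nonlocality of $\Delta^{\alpha/2}$ acting on the cutoff (the error terms do not localize), and extending (\ref{tG_D_fs}) to test functions of the form $-G_D f$ requires justifying the identity for functions with only $\delta_D^{\alpha/2}$ decay at $\partial D$, which is not immediate.

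The paper avoids both. It invokes a Weyl-type lemma for the fractional Laplacian, \cite[Theorem~3.12]{MR1671973}: the vanishing of $\int F(y)\Delta^{\alpha/2}\varphi(y)\,dy$ for all $\varphi\in C_c^\infty(D)$ forces $F$ to coincide on $D$ with an $\alpha$-harmonic function $\lambda$. One then observes that $\lambda$ vanishes on $D^c$ (all three terms do) and is bounded near $\partial D$ (by Lemma~\ref{lem:GLDupper} and Lemma~\ref{lem:UnifInt}), and concludes $\lambda\equiv 0$ from \cite[Lemma~17]{MR1671973}. Continuity of both sides of (\ref{eq:wp}) in $y\in D\setminus\{x\}$ then gives the pointwise equality. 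This route replaces your approximation step by a structural result about weak $\alpha$-harmonicity, which is the missing idea in your proposal.
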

\begin{proof}
Let $x\in D$. For $\phi\in \pK^{\alpha-1}_d$ we consider
$$
\Lambda(\phi)= 
\int_\Rd \left [\tGDD(x,y)-\GD(x,y)-\int_\Rd \tGDD(x,z)b(z)\cdot \nabla_z\GDD(z,y)dz\right]\phi(y)dy\,.
$$
By Lemma~\ref{lem:UnifInt} and Lemma~\ref{lem:GLDupper}, the {\it  iterated}\/ integral converges absolutely.
If $\varphi\in C^\infty_c(D)$ and $\phi=\Delta^{\alpha/2}\varphi$, then using (\ref{tG_D_fs}), (\ref{eq:fg}) and Lemma~\ref{GradGreen} we obtain
\begin{eqnarray*}
\Lambda(\phi)=-\varphi(x)-\tGDD(b\nabla\varphi)(x) +\varphi(x) -\tGDD (b \nabla(-\varphi))(x)=0\,.
\end{eqnarray*}
By \cite[Theorem 3.12]{MR1671973}, $\Lambda(\phi)=\int_\Rd \phi(y)\lambda(y)dy$, where 
$\lambda$ is $\alpha$-harmonic on $D$. Since our $\lambda$ is bounded near $\partial D$ and vanishes on $D^c$, we have that $\lambda\equiv 0$, see \cite[Lemma 17]{MR1671973}. 
By uniform integrability and the remark following (\ref{eq:GradEstimGreen}) and by Lemma~\ref{lem:GLDupper} we see that both sides of (\ref{eq:wp}) are continuous in $y\in D\setminus \{x\}$, hence we have pointwise equality in (\ref{eq:wp}).
\end{proof}
In addition to $\kk_1$ we inductively define
\begin{equation*}
\kk_{n}(x,y)=\int \kk_{n-1}(x,z) b(z)\cdot \nabla_z G(z,y)\,dz\,,\quad x\neq y\in D\,,\quad n=2,3,\ldots\,.
\end{equation*}
We also let $\kk_0(x,y)=G(x,y)$.
By (\ref{eq:GradEstimGreen}),  (\ref{kxyEstimates}), (\ref{3GIntegral}), Lemma~\ref{Lem3GIntegral} and {induction},
\begin{eqnarray}
  &&|\kk_{n}(x,y)|  \le  \int_D  |\kk_{n-1}(x,z)| |b(z)| |\nabla_z G(z,y)| \,dz \nonumber  \\ 
  && \le  (\CI d)^{n-1} \int_D  |b(z)|  G(x,z)  |\nabla_z G(z,y)| \,dz  \le (\CI d)^{n} G(x,y)\,,\label{eq:eGn}
\end{eqnarray}
where $n=0,1, 2, \ldots$ and $x\neq y$.
By (\ref{eq:eGn}) and induction we prove that
$$
\int_D \kk_n (x,z) \int_D b(z)\cdot \nabla_z G(z,y) f(y)\,dydz=
\int_D \kk_{n+1}(x,y) f(y)\,dy\,,
$$
hence $G_n$ is the integral kernel of $G(\Ab G)^n$, 
\begin{equation*}
  G(\Ab G)^n f(x) = \int_D \kk_n(x,y) f(y)\,dy\,, \quad f \in \pK_d^{\alpha-1}\,,\quad x\in D\,.
\end{equation*}
We can also handle the gradient of $G_n$. 
Namely, for $x,y \in D$, $x\neq y$, and $n=1,2,\ldots$, we have:
\begin{equation}\label{eq:gn1}
|\nabla_x G_{n-1}(x,y)|\leq (2\CI)^{n-1} d^n\frac{\GDD(x,y)}{\delta(x) \land |x-y|}\,,
\end{equation}
\begin{equation}\label{eq:gn2}
G_n(x,y)=\int G(x,z)b(z)\cdot \nabla_z G_{n-1}(z,y)dz\,,
\end{equation}
and
\begin{equation}\label{eq:gn3}
\nabla_x G_n(x,y)= \int \nabla_x G_{n-1}(x,z)b(z)\cdot \nabla_z G(z,y)dz\,.
\end{equation}
The inequality and the equalities are proved consecutively by induction.
In the process we use Lemma~\ref{GradGreen},  (\ref{nabla3GIntegral}),  estimates following  (\ref{eq:3G}) in the proof of Lemma~\ref{Lem3GIntegral}, and Fubini's theorem.

Iterating (\ref{eq:wp}), by (\ref{eq:gn2}) we obtain for $n=0, 1,\ldots$, and $x\neq y$,
\begin{eqnarray}
\tG(x,y)&=&G(x,y)+\int \tG(x,z)b(z)\cdot \nabla_zG(z,y)dz\nonumber\\
&=&\sum_{k=0}^n \kk_k(x,y)
+\int \tG(x,z)b(z)\cdot \nabla_{z}G_n(z,y)dz\,.\label{eq:ipf}
\end{eqnarray}
The details are left to the reader.

\section[Local results] {Local results}
\label{chap:Green}

We will  prove and use the comparability of $G_S$ and $\tG_S$ for small smooth sets, $S$, of class $C^{1,1}$ (this part of our development is similar to \cite{MR2140204}).
The following is a variant of Khasminski's lemma (\cite{MR1329992}).
\begin{lem}\label{Theorem1s}
Let $d\geq 2$, $1<\alpha<2$, $b\in \pK_d^{\alpha-1}$ and $\lambda>0$. There is $\varepsilon=\varepsilon(d,\alpha,b,\lambda)>0$ such that if 
${\rm diam}(S)/r_0(S)\le \lambda$ and $\diam(S)\le\varepsilon$, then
  \begin{equation}
    \label{eq:egfs}
\frac{2}{3}G_S(x,y) \le \tG_S(x,y) \le \frac{4}{3} G_S(x,y), \qquad x,y \in
\Rd\,.
  \end{equation}
\end{lem}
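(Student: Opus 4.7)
The plan is to identify $\tG_S$ with the absolutely convergent perturbation series $\sum_{n=0}^\infty G_n$ constructed in Section~\ref{s:3}, exploiting the fact that the majorant constant $C_1$ from Lemma~\ref{Lem3GIntegral} can be made arbitrarily small by shrinking $\varepsilon$. First I would use Lemma~\ref{Lem3GIntegral} to choose $\varepsilon$ so small that $q:=C_1 d \le 1/4$ (in particular $2C_1 d<1$). Then the bound (\ref{eq:eGn}), $|G_n(x,y)|\le q^n G_S(x,y)$, ensures that $H(x,y):=\sum_{n=0}^\infty G_n(x,y)$ converges absolutely with
$$|H(x,y)-G_S(x,y)|\le \tfrac{q}{1-q}G_S(x,y)\le \tfrac{1}{3}G_S(x,y),$$
i.e.\ $\tfrac{2}{3}G_S\le H\le \tfrac{4}{3}G_S$.

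It remains to show $\tG_S=H$. For this I would invoke the iterated perturbation formula (\ref{eq:ipf}),
$$\tG_S(x,y)=\sum_{k=0}^n G_k(x,y)+R_n(x,y),\qquad R_n(x,y)=\int_S \tG_S(x,z)\,b(z)\cdot \nabla_z G_n(z,y)\,dz,$$
and show that $R_n(x,y)\to 0$ pointwise for $x\ne y$ in $S$. The gradient estimate (\ref{eq:gn1}) (with $x$ replaced by $z$) gives $|\nabla_z G_n(z,y)|\le (2C_1 d)^n d\,G_S(z,y)/(\delta_S(z)\wedge |z-y|)$, hence
$$|R_n(x,y)|\le (2C_1 d)^n\,d\cdot I(x,y),\qquad I(x,y):=\int_S \tG_S(x,z)\,|b(z)|\,\frac{G_S(z,y)}{\delta_S(z)\wedge |z-y|}\,dz.$$
Since $2C_1 d<1$, it suffices to verify $I(x,y)<\infty$ for fixed $x\ne y$.

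To do this I would combine the rough bound $\tG_S(x,z)\le C_0 |x-z|^{\alpha-d}$ from Lemma~\ref{lem:GLDupper} with the sharp estimate (\ref{GreenEstimatesTJ}) and dissect $S$ into a neighborhood of $z=x$, a neighborhood of $z=y$, a tubular neighborhood of $\partial S$ away from $\{x,y\}$, and the remaining compact interior. Near $x$ the remaining factors are bounded and the integrand is controlled by $c(x,y)|b(z)||x-z|^{\alpha-d}$, which is integrable since $\pK_d^{\alpha-1}\subset \pK_d^\alpha$. Near $y$, using $G_S(z,y)\approx |z-y|^{\alpha-d}$ and $\delta_S(z)\wedge |z-y|\approx |z-y|$, the integrand is at most $c(x,y)|b(z)||z-y|^{\alpha-1-d}$, integrable by (\ref{eq:Kc}). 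In the tubular neighborhood of $\partial S$ (where $|x-z|$ and $|z-y|$ stay bounded below), $|x-z|^{\alpha-d}$ is bounded and the remaining factor $G_S(z,y)|b(z)|/(\delta_S(z)\wedge |z-y|)$ is integrable by Lemma~\ref{lem:UnifInt}; the bulk contribution is trivially finite.

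Combining, $R_n\to 0$, so $\tG_S=H$ and (\ref{eq:egfs}) follows. The main obstacle is the verification that $I(x,y)<\infty$: the only pointwise bound available on $\tG_S$ prior to the proof is $C_0|x-z|^{\alpha-d}$, which is weaker than $G_S(x,z)$ near $\partial S$, so one must rely crucially on the Kato condition for $b$ and on the $\delta_S^{\alpha/2}$ boundary decay of the Green function encoded in (\ref{GreenEstimatesTJ}) and Lemma~\ref{lem:UnifInt}.
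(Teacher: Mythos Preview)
Your proposal is correct and follows essentially the same route as the paper: choose $\varepsilon$ via Lemma~\ref{Lem3GIntegral} so that $C_1 d\le 1/4$, use $|G_n|\le (C_1 d)^n G_S$ from (\ref{eq:eGn}) to sum the series, and identify $\tG_S$ with $\sum_n G_n$ by showing the remainder in (\ref{eq:ipf}) tends to zero via Lemma~\ref{lem:GLDupper} and (\ref{eq:gn1}). The only difference is cosmetic: where you carry out an explicit four-region decomposition to verify $I(x,y)<\infty$, the paper simply invokes Lemma~\ref{lem:UnifInt} together with (\ref{eq:akc}), which amounts to the same splitting (a small ball about $x$ where $\delta_S(z)$ and $|y-z|$ are bounded below, versus its complement).
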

\begin{proof}
Let $D=S$.
By Lemma~\ref{Lem3GIntegral} and (\ref{eq:eGn}) there is $\varepsilon=\varepsilon(d,\alpha,b,\lambda)>0$ and
$$
|G_n(x,y)|\leq 4^{-n} G(x,y)\,,\quad x\neq y\,,\; n=0,1,\ldots,
$$
provided $\diam(D)/r_0(D)\le \lambda$ and $r_0(D)\le \varepsilon$.
For $x\neq y$ we have $\tG(x,y)=\sum_{n=0}^\infty \kk_n(x,y)$. 
Indeed, the remainder in (\ref{eq:ipf}) is bounded by
\begin{eqnarray*}
c\int_D |x-z|^{\alpha-d}|b(z)|  (2C_1)^{n-1}d^n\frac{G(z,y)}{\delta(z)\land |y-z|}dz\to 0\,,\quad \mbox{ as } n\to \infty\,.
\end{eqnarray*}
Here $2C_1 d\leq 1/2$ and the integral is finite because of Lemma~\ref{lem:UnifInt} and (\ref{eq:akc}). Thus,
$$
\tG(x,y)
=\sum_{n=0}^\infty \kk_n(x,y)\le
\sum_{n=0}^\infty 4^{-n}G(x,y) = \frac{4}{3}G(x,y)\,,
$$
and
$$
\tG(x,y)\geq
G(x,y) - \sum_{n=1}^\infty 4^{-n}G(x,y)=\frac{2}{3}G(x,y)\,.
$$

\end{proof}
We like to note that the comparison constants in the above proof will improve to $1$ if  ${\rm diam}(S)\to 0$ and the distortion of $S$ is bounded.
By (\ref{eq:IWt}),
\begin{equation}
  \tPP^x(X_{\tau_S} \in A) \approx \PP^x(X_{\tau_S} \in A)\,,\quad x\in S\,,\quad A\subset (\overline{S})^c\,. \label{I-WComp1}
\end{equation}

We are in a position to prove that the boundary of our general $C^{1,1}$ open set $D$ is not hit at the first exit (recall that $1<\alpha<2$).
\begin{lem}\label{l:nu}
For every $x\in D$ we have that $\tPP^x(X_{\tau_D}\in \partial D)=0$.
\end{lem}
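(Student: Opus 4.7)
The plan is to prove the identity
\begin{equation}\label{eq:planKey}
\int_D \tG_D(x,y)\,\kappa_D(y)\,dy = 1, \qquad x \in D,
\end{equation}
where $\kappa_D(y) := \int_{D^c}\nu(z-y)\,dz$. Once \eqref{eq:planKey} is established, Lemma~\ref{lem:lsgp} together with Fubini identifies the left-hand side with $\tPP^x(X_{\tau_D}\in(\overline{D})^c)$; combining with $\tPP^x(\tau_D<\infty) = 1$ (which follows from (\ref{eq:eotp})) and with the disjoint decomposition $D^c = \partial D \sqcup (\overline{D})^c$ yields $\tPP^x(X_{\tau_D}\in\partial D) = 0$. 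Note also that $\nu$ is absolutely continuous, so $\nu(\partial D) = 0$ and jumps from $D$ cannot land on $\partial D$, which is consistent with the claim.

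The starting point is the corresponding fact for the unperturbed process. By (\ref{eq:Pk}) and Fubini, $\PP^z(X_{\tau_D}\in(\overline{D})^c) = \int_D G_D(z,y)\kappa_D(y)\,dy$, and the classical no-creeping result from \cite{MR1438304} cited in the preliminaries gives $\PP^z(X_{\tau_D}\in(\overline{D})^c) = 1$. Hence $h(z) := \int_D G_D(z,y)\kappa_D(y)\,dy \equiv 1$ on $D$, so $\nabla h \equiv 0$. Differentiating under the integral via Lemma~\ref{GradGreen} (whose uniform integrability hypothesis is checked for $f = \kappa_D$ by splitting the $y$-integration at $|y-z|=\delta_D(z)$: on $\{|y-z|<\delta_D(z)\}$ the factor $\kappa_D$ is locally bounded and $\alpha>1$ makes $|z-y|^{\alpha-1-d}$ integrable, while on $\{|y-z|\ge\delta_D(z)\}$ we use $G_D(z,y)/|z-y|\le G_D(z,y)/\delta_D(z)$ together with $h\equiv 1$) yields
\begin{equation}\label{eq:planGrad}
\int_D \nabla_z G_D(z,y)\,\kappa_D(y)\,dy = 0, \qquad z \in D.
\end{equation}

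Multiplying the pointwise perturbation formula of Lemma~\ref{lem:pf} by $\kappa_D(y)$, integrating over $y\in D$, interchanging the order of integration in the resulting double integral, and applying \eqref{eq:planGrad} gives
\[
\int_D \tG_D(x,y)\kappa_D(y)\,dy \;=\; 1 + \int_D \tG_D(x,z)\,b(z)\cdot\Bigl[\int_D \nabla_z G_D(z,y)\kappa_D(y)\,dy\Bigr]\,dz \;=\; 1 + 0,
\]
which is \eqref{eq:planKey}.

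The main technical obstacle is justifying the Fubini swap in the last display, since $\kappa_D(y)\sim\delta_D(y)^{-\alpha}$ diverges at $\partial D$. Absolute integrability of the iterated integrand $\tG_D(x,z)\,|b(z)|\,|\nabla_z G_D(z,y)|\,\kappa_D(y)$ can be verified by combining the rough upper bound of Lemma~\ref{lem:GLDupper} for $\tG_D$, the gradient estimate (\ref{eq:GradEstimGreen}), the identity $G_D\kappa_D\equiv 1$, Lemma~\ref{lem:UnifInt}, the Kato condition on $b$, and the same splitting by $|z-y|$ used in justifying \eqref{eq:planGrad}; this simultaneously legitimises the Fubini step and the gradient interchange.
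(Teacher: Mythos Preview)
Your approach is genuinely different from the paper's, which uses a short probabilistic argument: set $u(x)=\tPP^x(X_{\tau_D}\in\partial D)$, show $u\le 1-c$ on $D$ by sending mass from a small interior ball to an exterior ball via (\ref{I-WComp1}), then use the strong Markov property at $\tau_{D_n}$ to obtain $\sup u\le (1-c)\sup u$, forcing $u\equiv 0$.

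Your analytic route via the perturbation formula is elegant in spirit, and the identity $\int_D\nabla_z G_D(z,y)\kappa_D(y)\,dy=0$ is correct. The problem is the Fubini swap, which is not merely a ``technical obstacle'' but actually fails under the absolute-value estimates you invoke. For $z$ near $\partial D$ with $|y-z|>\delta_D(z)$ one has $|\nabla_z G_D(z,y)|\asymp G_D(z,y)/\delta_D(z)$ (the paper notes after (\ref{eq:GradEstimGreen}) that the reverse inequality holds locally at the boundary), and combining this with $\int_D G_D(z,y)\kappa_D(y)\,dy=1$ gives
\[
\int_D |\nabla_z G_D(z,y)|\,\kappa_D(y)\,dy \;\asymp\; \frac{1}{\delta_D(z)}\,.
\]
Hence, after bounding $\tG_D(x,z)\le C_0|x-z|^{\alpha-d}$, the iterated absolute integral is at least comparable to
\[
\int_D |x-z|^{\alpha-d}\,|b(z)|\,\delta_D(z)^{-1}\,dz\,,
\]
which diverges already for constant $b$ (take $x$ in the interior; in a tubular neighbourhood $\int \delta_D(z)^{-1}\,dz\approx |\partial D|\int_0^{r_0}t^{-1}\,dt=\infty$). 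Lemma~\ref{lem:UnifInt} does not help here: it controls $\int G_D(y,z)/(\delta_D(z)\wedge|y-z|)\,|b(z)|\,dz$, whereas you need the $y$-integral against $\kappa_D(y)\,dy$, whose boundary singularity $\delta_D(y)^{-\alpha}$ is what produces the $\delta_D(z)^{-1}$ blow-up.

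A truncation $\kappa_D^{(\varepsilon)}=\kappa_D\mathbf 1_{\{\delta_D>\varepsilon\}}$ does make Fubini valid, but then the inner gradient integral is no longer zero, and passing to the limit requires dominating $\int_{D\setminus D_\varepsilon}|\nabla_z G_D(z,y)|\kappa_D(y)\,dy$ by something integrable in $z$ against $|b(z)|\,dz$; the natural dominant is again $\asymp\delta_D(z)^{-1}$, so dominated convergence is unavailable. In short, the cancellation that makes $\int_D\nabla_z G_D(z,y)\kappa_D(y)\,dy$ vanish is exactly what is lost when you take absolute values, and without it the double integral diverges. The paper's probabilistic argument sidesteps this entirely.
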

\begin{proof}
Let $u(x)=\tPP^x(X_{\tau_D}\in \partial D)$, $x\in \Rd$.
We claim that there exists $c=c(d,\alpha,D, b)>0$ such that $u(x)<1-c$ for $x\in D$.
Indeed, we consider small $\varepsilon>0$, $x\in D$,  $r=\varepsilon\, {\rm dist}(x,D^c)$, the ball $B=B(x,r/2)\subset D$, and a ball $B'\subset (\overline{D})^c$ with radius and distance to $B$ comparable with $r$. By (\ref{I-WComp1}) and (\ref{eq:poisson:ball}),
$$\tPP^x(X_{\tau_D}\notin \partial D)
\geq \tPP^x(X_{\tau_B(x,r/2)}\in B')\approx \PP^x(X_{\tau_B(x,r/2)}\in B')
\geq c\,.$$ 
 Furthermore, let $D_n=\{y\in D:\,{\rm dist}(y,D^c)>1/n\}$, $n=1,2,\ldots$.
We consider $n$ such that $B(x,r/2)\subset D_n$.
We have 
$\tPP^x(X_{\tau_{D_n}}\in \overline{D})\leq 1- \tPP^x(X_{\tau_B}\in B')\leq 1-c$, as before.
Let $C=\sup \{u(y): y\in D\}$. We have
$u(x)=\tEE^x\{u(X_{\tau_{D_n}});\,X_{\tau_{D_n}}\in \overline{D}\}\leq C(1-c)$, hence
$C\leq C(1-c)$ and so $C=0$. 
\end{proof}
In the context of Lemma~\ref{Theorem1s}, 
the $\tPP^x$ distribution of $X_{\tau_S}$ is absolutely continuous with respect to the Lebesgue measure,
and has density function 
\begin{equation}\label{PoissonComp}
  \tilde P_S(x,y) \approx P_S(x,y)\,,\quad \;y\in S^c\,,
\end{equation}
provided $x\in S$.
This follows from (\ref{eq:IWt}) and Lemma~\ref{l:nu}.
For clarity, 
\begin{equation}
  \tPP^x(X_{\tau_S} \in A) \approx \PP^x(X_{\tau_S} \in A)\,,\quad x\in S\,,\quad A\subset S^c\,. \label{I-WComp}
\end{equation}

\begin{lem}[Harnack inequality for $L$]\label{HIforL}
  Let $x,y \in \Rd$, $0<s<1$ and $k\in \mathbb{N}$ satisfy $|x-y|
  \le 2^ks$. Let ${\tu}$ be nonnegative in $\Rd$ and
  $L$-harmonic in $B(x,s) \cup B(y,s)$.  There is $C = C(d, \alpha, b)$ such that
  \begin{equation}\label{LHarnackIneq}
    C^{-1}2^{-k(d+\alpha)}{\tu}(x) \le {\tu}(y) \le C 2^{k(d+\alpha)}{\tu}(x)\,.
  \end{equation}
\end{lem}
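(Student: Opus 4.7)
The plan is to mimic the proof of Lemma~\ref{HI} (the $\alpha$-harmonic Harnack), using the comparability $\tilde P_B \approx P_B$ on small balls provided by Lemma~\ref{Theorem1s} and (\ref{PoissonComp}). Concretely, fix $\varepsilon_0 = \varepsilon(d,\alpha,b,2)/2 > 0$ (where $2$ is the distortion of any Euclidean ball). Then for every ball $B = B(z, r)$ with $r \le \varepsilon_0$, one has $\tilde P_B(z', w) \approx P_B(z', w)$ uniformly in $z' \in B$ and $w \in B^c$, with implicit constants depending only on $d, \alpha, b$.

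I would first reduce to $s \le \varepsilon_0/4$. Since $0 < s < 1$, replacing $s$ by $\tilde s = \min(s, \varepsilon_0/4)$ preserves the hypotheses and yields $|x-y| \le 2^{k+m}\tilde s$ for a nonnegative integer $m$ depending only on $d, \alpha, b$; the resulting factor $2^{m(d+\alpha)}$ is absorbed into $C$. Next I would establish a local Harnack: for nonnegative $\tu$ that is $L$-harmonic on $B = B(z, r)$ with $r \le \varepsilon_0$, and for $z' \in B(z, r/2)$, Lemma~\ref{l:nu}, (\ref{eq:IWt}), the kernel comparison above, and (\ref{eq:poisson:ball}) yield
\begin{equation*}
\tu(z') = \int_{B^c} \tu(w)\, \tilde P_B(z', w)\, dw \approx \int_{B^c} \tu(w)\, P_B(z, w)\, dw \approx \tu(z),
\end{equation*}
where $P_B(z', w) \approx P_B(z, w)$ is immediate from (\ref{eq:poisson:ball}) since $r^2 - |z'-z|^2 \approx r^2$ and $|z'-w| \approx |z-w|$ in this regime.

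It remains to handle $|x - y| \le 2^k s$ with $s \le \varepsilon_0/4$. If $|x - y| \le 2s$ (so $k \le 1$), a chain of boundedly many points along the segment $[x, y]$, each admitting a ball of radius $\ge s/2$ contained in $B(x,s) \cup B(y,s)$, combined with the local Harnack at each step, gives $\tu(x) \approx \tu(y)$ with an absolute constant. If $|x-y| > 2s$, I take $B = B(x, s/2) \subset B(x, s)$. The kernel comparison and (\ref{eq:poisson:ball}) give $\tilde P_B(x, w) \gtrsim s^\alpha |w-x|^{-d-\alpha}$ for $|w-x| \ge s$. For $w \in B(y, s/2)$ we have $s \le |w-x| \le 2^{k+1} s$, hence
\begin{equation*}
\tu(x) \ge \int_{B(y, s/2)} \tu(w)\, \tilde P_B(x, w)\, dw \gtrsim 2^{-k(d+\alpha)} s^{-d} \int_{B(y, s/2)} \tu(w)\, dw \gtrsim 2^{-k(d+\alpha)} \tu(y),
\end{equation*}
the last step by the local Harnack applied at $y$. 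Swapping $x$ and $y$ gives the matching bound for $\tu(x)$.

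The main obstacle is the bookkeeping: verifying that the scale reduction, the local Harnack, and the Poisson estimate combine so that the final constant depends only on $d, \alpha, b$ and the universal exponent $d+\alpha$ is preserved. A subtle point is the chain construction in the regime $|x-y| \le 2s$, where one must ensure each intermediate ball lies inside $B(x, s)$ or $B(y, s)$, so that $\tu$ is $L$-harmonic on it; this is where the hypothesis that $\tu$ is $L$-harmonic on the union $B(x,s) \cup B(y,s)$ rather than on a connected region actually matters.
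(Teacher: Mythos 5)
Your proposal follows essentially the same strategy as the paper's proof: both use the comparability $\tilde P_B \approx P_B$ on small balls (from Lemma~\ref{Theorem1s} via (\ref{PoissonComp})) to establish a local Harnack near a single point, and then invoke the explicit Poisson kernel of the ball (\ref{eq:poisson:ball}) to compare $\tilde{u}(x)$ and $\tilde{u}(y)$ when $x$ and $y$ are well separated, integrating $\tilde P_{B(x,s/2)}(x,\cdot)$ over $B(y,s/2)$. The paper implements the local Harnack by transferring $\tilde{u}$ to an auxiliary $\alpha$-harmonic function $u$ and quoting Lemma~\ref{HI}; you instead derive it directly from the Poisson formula, which is equally valid on a ball.

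There is, however, a bug in the case $|x-y|\le 2s$. You claim that each chain point on $[x,y]$ admits a ball of radius $\ge s/2$ contained in $B(x,s)\cup B(y,s)$. This fails when $|x-y|$ is near $2s$: take $m$ the midpoint, so $|m-x|=|m-y|=|x-y|/2$, and $q = m + (s/2)e$ with $e\perp(y-x)$; then $|q-x|^2 = |x-y|^2/4 + s^2/4 > s^2$ once $|x-y|>(\sqrt{3})s$, and by symmetry $|q-y|^2>s^2$ as well, so $B(m,s/2)\not\subset B(x,s)\cup B(y,s)$. The remedy is simple: split instead at $|x-y|\le s$ (the chain then works since every segment point is within $s/2$ of $x$ or of $y$, so it carries a ball of radius $s/4$ inside the union) versus $|x-y|>s$ (your Poisson-kernel argument applies, noting $|w-x|>s/2$ for $w\in B(y,s/2)$). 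Alternatively, as the paper does, when $|x-y|<3s/2$ replace $s$ by $s'=2|x-y|/3$ and apply the far-case argument with $k=1$, bypassing the chain altogether.
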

\begin{proof}
We may assume that $s\leq 1\land \varepsilon/2$, with  $\varepsilon$ of Lemma~\ref{Theorem1s}.
Let $u(z)=\tu(z)$ for $z\in B(y,2s/3)^c$ and $u(z)=\int_{B(y,2s/3)^c} {\tu}(v)P_{B(y,2s/3)}(z,v)\,dv$ for $z\in B(y,2s/3)$, so that
$u$ is nonnegative in $\RR^d$ and (regular) \ah{} in $B(y,2s/3)$. Let $z \in  B(y,s/2)$. 
By (\ref{I-WComp}),
 $$
    {\tu}(z) = \tEE^z {\tu}(X (\tau_{B(y,2s/3)})) = \int_{B(y,2s/3)^c} {\tu}(v)\tilde P_{B(y,2s/3)}(z,v)\,dv\approx u(z)\,.
 $$
By Lemma~\ref{HI} we get $\tilde{u}(z) \approx
  \tilde{u}(y)$ with a constant depending only on $d$,
  $\alpha$ and $b$. 
To compare $\tu(x)$ and $\tu(y)$ we will assume that $|x-y| \ge 3s/2$, because otherwise we may take
  smaller $s$. For $z \in B(y,s/2)$ we have $|x-z| \le   |x-y|+|y-z| 
 \le2^k s + s/2  \le 2^{k+1}s$, and we get
  \begin{eqnarray*}
    P_{B(x,s/2)}(x,z) &=& C_{d,\alpha} \left(\frac{(s/2)^2}{|x-z|^2 - (s/2)^2}\right)^{\alpha/2} \frac{1}{|x-z|^d}\\
    & \ge & C_{d,\alpha} 2^{-\alpha} s^\alpha |x-z|^{-d-\alpha}\\
    & \ge & C_{d,\alpha} 2^{-(d+2\alpha)}2^{-k(d+\alpha)} s^{-d}\,. 
  \end{eqnarray*}
Since
   $\tilde P_{B(x,s/2)} \approx P_{B(x,s/2)}$, by the first part of the proof we obtain
  \begin{eqnarray*}
    {\tu}(x) & = & \int_{B(x,s/2)^c} \tilde P_{B(x,s/2)}(x,z){\tu}(z)\,dz 
\ge \int_{B(y,s/2)} \tP_{B(x,s/2)}(x,z) {\tu}(z) \,dz \\
    &\approx& \int_{B(y,s/2)} P_{B(x,s/2)}(x,z) {\tu}(z) \,dz \\
    & \ge & |B(y,s/2)|C_{d,\alpha}2^{-(d+2\alpha)}2^{-k(d+\alpha)} s^{-d} {\tu}(y)
    =  c2^{-k(d+\alpha)}{\tu}(y)\,.
  \end{eqnarray*}
  By symmetry, $\tu (x)\approx \tu(y)$.
\end{proof}
We obtain a boundary Harnack principle for $L$ and general $C^{1,1}$ sets $D$.

\begin{lem}[BHP]\label{BHPforL}
  Let $z \in \partial{D}$, $0<r\le  r_0(D)$, and $0<p<1$. If
$\tilde{u}, \tilde{v}$ are nonnegative in $\Rd$, regular $L$-harmonic
  in $D \cap B(z,r)$, vanish on $D^c \cap B(z,r)$
  and satisfy $\tilde{u}(x_0)=\tilde{v}(x_0)$ for some $x_0 \in D \cap B(z,pr)$
  then
  \begin{equation}
    \label{BHPforLEq}
    \CIII^{-1}\tilde{v}(x) \le \tilde{u}(x) \le \CIII \tilde{v}(x)\,,\quad x \in D \cap B(z,pr)\,,
  \end{equation}
with $\CIII = \CIII(d,\alpha,b,p,r_0(D))$.
\end{lem}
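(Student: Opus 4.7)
The plan is to reduce this $L$-boundary Harnack principle to the classical $\Delta^{\alpha/2}$-BHP (Lemma~\ref{BHP}) via the Poisson-kernel comparison on small $C^{1,1}$ pieces of $D$ near $z$, and then to propagate the resulting local estimate by an $L$-Harnack chain argument.

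Step~1 (localization). I would set $\rho=\tfrac{1}{2}(r\wedge\varepsilon_1)$, where $\varepsilon_1=\varepsilon(d,\alpha,b,2/\kappa)$ is supplied by Lemma~\ref{Theorem1s}, and apply Lemma~\ref{l:loc} to produce a $C^{1,1}$ domain $F=F(z,\rho)\subset D$ with $r_0(F)>\kappa\rho$, $\diam(F)<2\rho\le\varepsilon_1$, $F\cap B(z,\rho/4)=D\cap B(z,\rho/4)$, and (since $z\in\partial F$) $F\subset B(z,2\rho)\subset B(z,r)$. The distortion of $F$ is at most $2/\kappa$, so Lemma~\ref{Theorem1s} gives $\tilde G_F\approx G_F$, and then (\ref{eq:Pk}), (\ref{eq:djpt}) and (\ref{PoissonComp}) yield $\tilde P_F(x,y)\approx P_F(x,y)$ for $x\in F$, $y\in F^c$.

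Step~2 ($\alpha$-harmonic surrogates). Since $F\subset D\cap B(z,r)$, the strong Markov property makes $\tilde u,\tilde v$ regular $L$-harmonic in $F$, and Lemma~\ref{l:nu} gives
\begin{equation*}
\tilde u(x)=\int_{F^c}\tilde u(y)\,\tilde P_F(x,y)\,dy,\qquad x\in F,
\end{equation*}
and analogously for $\tilde v$. Because $\tilde u,\tilde v$ vanish on $D^c\cap B(z,r)\supset F^c\cap B(z,\rho/4)$, only values on $F^c\setminus B(z,\rho/4)$ contribute. Define classical surrogates
\begin{equation*}
u^*(x)=\int_{F^c}\tilde u(y)\,P_F(x,y)\,dy,\qquad v^*(x)=\int_{F^c}\tilde v(y)\,P_F(x,y)\,dy\qquad(x\in F),
\end{equation*}
and extend $u^*=\tilde u$, $v^*=\tilde v$ on $F^c$. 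By Step~1, $\tilde u\approx u^*$ and $\tilde v\approx v^*$ on $F$, and $u^*,v^*$ are nonnegative and regular $\alpha$-harmonic in $D\cap B(z,\rho/4)=F\cap B(z,\rho/4)$, vanishing on $D^c\cap B(z,\rho/4)$. Lemma~\ref{BHP} therefore yields $u^*(x)/v^*(x)\approx u^*(y)/v^*(y)$ for $x,y\in D\cap B(z,\rho/8)$, which, combined with the two approximations and the normalization $\tilde u(x_0)=\tilde v(x_0)$, produces the desired $\tilde u\approx\tilde v$ on $D\cap B(z,\rho/8)$.

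Step~3 (chain extension). If $pr\le\rho/8$ the proof is complete. Otherwise I would bridge $D\cap B(z,\rho/8)$ and $D\cap B(z,pr)$ by an iterated chain: Lemma~\ref{HIforL} handles interior links, while for points $x$ close to $\partial D$ one reapplies Steps~1--2 at a boundary point of $\partial D$ nearest to $x$. Since $r\le r_0(D)$ is bounded, the number of chain steps depends only on $d,\alpha,b,p,r_0(D)$ and may be absorbed into~$C$. The hard part will be this final chaining: one must construct a suitable cover of $D\cap B(z,pr)$ on which each piece admits either Lemma~\ref{HIforL} or the localized BHP of Step~2, and patch the two kinds of estimates while keeping the dependence of constants on $r_0(D)$ (rather than on $\diam(D)$) intact.
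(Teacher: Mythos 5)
Your proposal follows the same route as the paper's proof: localize to a small $C^{1,1}$ domain $F=F(z,\cdot)$ via Lemma~\ref{l:loc}, exploit the Poisson-kernel comparability $\tP_F\approx P_F$ from Lemma~\ref{Theorem1s} and (\ref{PoissonComp}) to replace $\tilde u,\tilde v$ by $\alpha$-harmonic surrogates on $F$, apply the classical BHP (Lemma~\ref{BHP}) to those surrogates, and then extend from a small ball $D\cap B(z,\rho/8)$ to the full range $D\cap B(z,pr)$ by Harnack chaining with Lemma~\ref{HIforL}. The paper compresses your Step~3 into the one remark ``we use Lemma~\ref{HIforL} for the full range''; your explicit observation that the boundary-adjacent links of the chain cannot be handled by interior Harnack alone, and require re-running the localization at nearby boundary points (with the normalization propagated along the chain), is exactly what that remark tacitly requires, so you have not taken a different route but have simply been more explicit about the chaining.
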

\begin{proof}  In view of  Lemma~\ref{HIforL} we may assume that $r$ is small.
Let $F=F(z,r/2) \subset B(z,r)$ be the $C^{1,1}$ domain of Lemma~\ref{l:loc}, localizing $D$ at $z$.
For $x
  \in F$ we have $\tilde{u}(x) = \int \tilde P_{F}(x,z) \tu(z)\,dz
  \approx u(x)$, where $u(x)= \int P_{F}(x,z) \tu(z)\,dz$.
Similarly $\tilde{v}(x) \approx v(x)=\int P_{F}(x,z) \tilde{v}(z)\,dz$. Since $\tilde{u}(x_0)
  = \tilde{v}(x_0)$, we have $u(x_0)\approx v(x_0)$.
    By Lemma~\ref{BHP}, ${u}(x) \approx {v}(x)$, provided $x\in D\cap B(z,r/8)$.
We use Lemma~\ref{HIforL} for the full range $x\in D\cap B(z,pr)$.
\end{proof}

\section{Proof of Theorem~\ref{Theorem1}}\label{sec:b}
 By (\ref{eq:wp}) and (\ref{eq:GradEstimGreen}) we have the estimate
\begin{equation}\label{eq:0}
  \tG(x,y) \le G(x,y) + d \int_{D} \frac{\tG(x,z)G(z,y)}{\deltaDD(z)\land |y-z|} |b(z)|\,dz\,,\quad
x,y\in D\,.
\end{equation}
We consider  $\eta<1$, say $\eta=1/2$.
By Lemma~\ref{lem:UnifInt} and the uniform integrability in Lemma~\ref{Lem3GIntegral} (see (\ref{eq:3G})) there is a constant $r>0$ so small that
\begin{equation}\label{eq:1}
\int_{D^r} \frac{G(z,y)}{\deltaDD(z)\land |y-z|} |b(z)|\,dz  <\frac{\eta}{d}\,,\quad
  y \in D\,,
\end{equation}
and
\begin{equation}\label{eq:2}
  \int_{D^r} \frac{G(x,z)G(z,y)}{G(x,y)(\deltaDD(z)\land |y-z|)} |b(z)|\,dz <\frac{\eta}{d}\,, \qquad y \in D\,.
\end{equation}
Here $D^r = \{z \in D \colon \deltaDD(z) \leq r\}$. 
We denote $$\rho=[\varepsilon \land r_0(D)\land r]/16\,,$$ with $\varepsilon=\varepsilon(d,\alpha,b,2/\kappa)$ of Lemma~\ref{Theorem1s}, see also Lemma~\ref{l:loc}. 

To prove (\ref{eq:egf}) we will consider $x$, $y$ in a partition of $D\times D$.  We will also consider $Q,R \in \partial D$ 
such that $\deltaDD(x) = |x-Q|$, $\delta(y)=|y-R|$.

I\@. First we suppose that $\deltaDD(y)\geq \rho /4$. We denote
  \begin{itemize}
\item $D_1= \{x\in D\colon\; |y-x|\leq \rho /8\}$,
\item $D_2 = \{x\in D\colon\;\delta(x)\ge \rho/8\}$,
\item $D_3 = \{x\in D \colon \; \deltaDD(x) <\rho /8\}$.

  \end{itemize}
\noindent a) Let $x \in D_1$.  Denote $B=B(y,\rho /4)$. By (\ref{GreenEstimates}) we have $G_B(x,y)  \approx |x-y|^{\alpha-d} \approx G(x,y)$. Lemma~\ref{Theorem1s} yields the lower bound in (\ref{eq:egf}):
$$\tGDD(x,y) \ge \tG_B(x,y) \approx G_B(x,y) \approx G(x,y)\,.$$
By Lemma~\ref{lem:GLDupper} we get the upper bound in (\ref{eq:egf}).\\

\noindent b) Let $x \in D_2\setminus D_1$. Let $x_0 \in D$ be such that $|x_0-y|=\rho /8$. $\tGDD(\cdot,y)$ is $L$-harmonic in $B(x,\rho /8) \cup B(x_0,\rho /8)$. 
By Lemma~\ref{HIforL}, a) and Lemma~\ref{HI} we get 
$\tGDD(x,y) \approx \tG(x_0,y)\approx G(x_0,y)  \approx G(x,y)$. \\

\noindent c) Let $x \in D_3$. Let $x_0,y_0 \in D$ be collinear with $x,Q$ and such that $\delta(x_0)=|x_0-Q|=5\rho/32$ and $\delta(y_0)=|y_0-Q|=7\rho/32$ (consider an inner ball tangent at $Q$ to see the situation). Let $F=F(Q,\rho)$ be the approximating domain of Lemma~\ref{l:loc}. The functions $\tG(\cdot,y_0)$, $\tG_F(\cdot,y_0)$ and $\tGDD(\cdot,y)$ are regular
$L$-harmonic in $B(Q,6\rho /32)$. 
By Lemma~\ref{BHPforL}, Lemma~\ref{Theorem1s} and Lemma~\ref{BHP},
$$
\frac{\tGDD(x,y)}{\tGDD(x_0,y)} \approx
\frac{\tG_F(x,y_0)}{\tG_F(x_0,y_0)} \approx \frac{G_F(x,y_0)}{G_F(x_0,y_0)}
\approx \frac{\GDD(x,y)}{\GDD(x_0,y)}\,.
$$
By a) and b), $\tGDD(x_0,y) \approx G(x_0,y)$ and we obtain (\ref{eq:egf}) in the considered case I, that is for $\delta(y)\ge \rho/4$ and all $x\in D$. 

Before we proceed to the next case we recall that $\tGDD$ is non-symmetric.\\

II\@. Suppose that $\deltaDD(y) \le \rho /4$.
The proof of (\ref{eq:egf}) follows in 3 steps. \\
Step 1. We will first prove that 
$\tGDD(x,y) \ge cG(x,y)$, $x\in D$.

To this end we denote
\begin{itemize}
\item $F_1 = \{x\in D\colon\; |x-R|\leq \rho\}$,
\item $F_2 = \{x\in D \colon \deltaDD(x) \ge \rho /4 \}$,
\item $F_3 = \{x\in D\colon\; \delta(x)<\rho/4\}$.
\end{itemize}

\noindent d) Let $x \in F_1$. Consider $F=F(R,8\rho)$. Then
$\deltaDD(x)=\delta_F(x)$ and $\deltaDD(y) =
\delta_F(y)$. Consequently, $G(x,y) \approx G_F(x,y)$, see (\ref{GreenEstimates}). 
As before we
have
$$
\tGDD(x,y) \ge \tG_F(x,y) \approx G_F(x,y) \approx G(x,y)\,.
$$

\noindent e) Let $x \in F_2\setminus F_1$. Let $x_0 \in D$ be collinear with $y$, $R$ and such that
$\deltaDD(x_0)=|x_0-R|=\rho /2$. We note that $|x-y| \ge 3\rho/4$ and $|x_0-y|\ge \rho/4$.
 By Harnack inequalities and d),
$\tGDD(x,y) \approx \tGDD(x_0,y) \ge cG(x_0,y) \approx G(x,y)$.\\

\noindent f) Let $x \in F_3\setminus F_1$. Let $z_0 \in D$ be such that
$\deltaDD(z_0) =|z_0-R|=\rho/3 $ and let $x_0 \in D$ be such that $\deltaDD(x_0)=|x_0-Q|=
\rho /4$. We have $|x_0-y|>\rho/2$ and $|z_0-y|\ge \rho/12$.
By Harnack inequalities and d),
\begin{equation}\label{eq:gfoi}
\tGDD(x_0,y) \approx \tGDD(z_0,y) \ge cG(z_0,y) \approx G(x_0,y)\,.
\end{equation}
$\tG(\cdot,z_0)$ and $\tGDD(\cdot,y)$ are regular
$L$-harmonic in $D\cap B(Q,\rho /3)$ because $|z_0-Q|>5\rho/12$ and $|y-Q|>\rho/2$. 
By Lemma~\ref{BHPforL}, part I and
Lemma~\ref{BHP},
$$
\frac{\tGDD(x,y)}{\tGDD(x_0,y)} \approx
\frac{\tGDD(x,z_0)}{\tGDD(x_0,z_0)} \approx \frac{\GDD(x,z_0)}{\GDD(x_0,z_0)}
\approx \frac{\GDD(x,y)}{\GDD(x_0,y)}\,.
$$
By this and (\ref{eq:gfoi}),  $\tGDD(x,y) \ge
cG(x,y)$.\\

Step 2. We next prove the upper bound in (\ref{eq:egf}) for $\deltaDD(x) \ge \rho/4$. 
By part I,
$$
c_1^{-1} G(x,z) \le \tG(x,z) \le c_1 G(x,z)\,, \quad z \in D \setminus D^r\,.
$$
The constant $\CIV$ and other constants in what follows will only depend on $d$, $\alpha$, (the suprema in the Kato condition for) $b$, $r_0(D)$ and ${\rm diam}(D)$.

By (\ref{3GIntegral}) and Lemma~\ref{Lem3GIntegral},
$$
\int_{D} \frac{G(x,z)G(z,y)}{\deltaDD(z)\land |y-z|}
|b(z)|\,dz \le C_1 G(x,y)\,.
$$
Therefore by (\ref{eq:0}), 
\begin{align}
  \tG(x,y) &\le G(x,y) + c_1 d\int_{D \setminus D^r} \frac{G(x,z)G(z,y)}{\deltaDD(z)\land |y-z|} |b(z)|\,dz \nonumber\\
  &+ d \int_{D^r} \frac{\tG(x,z)G(z,y)}{\deltaDD(z)\land |y-z|} |b(z)|\,dz \nonumber\\
  & \le A G(x,y) + d\int_{D^r} \frac{\tG(x,z)G(z,y)}{|y-z| \land
    \deltaDD(z)} |b(z)|\,dz\,, \label{eq:toi}
\end{align}
where $A = 1+c_1 d C_1$. By Lemma~\ref{lem:GLDupper} and
(\ref{eq:1}) we obtain
\begin{align}
  \tG(x,y) &\le AG(x,y) + C_0 d\int_{D^r} \frac{|x-z|^{\alpha-d}G(z,y)}{\deltaDD(z)\land |y-z|} |b(z)|\,dz \nonumber\\
  & \le AG(x,y) + B(x)\,, \label{eq:3}
\end{align}
where $B(x)=\eta C_0 \delta_{D^r}(x)^{\alpha-d}$.  
We claim that for $n=0,1,\ldots$,
\begin{equation}\label{eq:AB}
\tG(x,y) \le A\big(1 + \eta +\cdots + \eta^n \big)G(x,y) +
\eta^n B(x)\,.
\end{equation}
This is proved by induction: we plug (\ref{eq:AB}) into
(\ref{eq:toi}), and use (\ref{eq:1}) and (\ref{eq:2}).
In consequence,
\begin{equation}\label{eq:ubi}
\tG(x,y) \le \frac{A}{1- \eta} G(x,y)\,.
\end{equation}
Step 3. We will now prove the upper bound in (\ref{eq:egf}) when $\deltaDD(x)< \rho/4$. 
We will consider $F_1$,  $F_3$ and $F$ from Step 1. 
If $x \in F_3\setminus F_1$ than we use the
same argument as in f), but this time all the terms in (\ref{eq:gfoi}) are comparable because of (\ref{eq:ubi}), and we obtain (\ref{eq:egf}). 
Finally, for $x \in F_1\subset F$ we have
$$
  \tGDD(x,y) =  
   \tG_F(x,y) + \int_{D \setminus F} \tP_F(x,z) \tGDD(z,y) \,dz\,.
$$
By Lemma~\ref{Theorem1s} , $\tG_F(x,y) \approx G_F(x,y)$. 
We already know that  for $z \in D \setminus F_1$,
$\tGDD(z,y) \approx G(z,y)$, and $\tP_F(x,z)\approx P_F(x,z)$ by (\ref{PoissonComp}). Thus,
$$
\tGDD(x,y) \approx G_F(x,y) + \int_{D \setminus F} P_F(x,z)
 G(z,y) \,dz = G(x,y)\,.
$$
The proof of Theorem~\ref{Theorem1} is complete.
In passing we only note that
(\ref{eq:djpt}) and (\ref{eq:egf}), and \cite{MR1654824} or \cite[Theorem 22]{MR1991120} yield sharp estimates of the Poisson kernel:
\begin{equation}\label{eq:ePk}
\tilde P_D(x,y)\approx P_D(x,y) \approx \delta_D(x)^{\alpha/2}\delta_{D^c}(y)^{-\alpha/2}\left[1\lor \delta_{D^c}(y)\right]^{-\alpha/2}|y-x|^{-d}\,.
\end{equation}

\section{Appendix}
Let function $\phi$ be of class $C^{1,1}$, i.e.\ satisfy
\begin{equation}\label{eq:C11}
|\nabla\phi(\tx)-\nabla\phi(\ty)|\leq \eta|\tx-\ty|\,,\quad \tx,\ty\in \RR^{d-1}\,,
\end{equation}
for some $\eta<\infty$. Let $f(t)=\phi((1-t)\ty+t\tx)$, $t\in [0,1]$.
We have
\begin{align}
&|\phi(\tx)-\phi(\ty)-\nabla \phi(\ty)\cdot (\tx-\ty)|
= \left|\int_0^1 \big(f\rq{}(t)-f\rq{}(0)\big)dt\right|\nonumber\\
&=\left|\int_0^1 (\tx-\ty)\cdot \big(\nabla \phi((1-t)\ty+t\tx)-\nabla \phi(\ty)\big)\right|
\leq 
\eta |\tx-\ty|^2/2\,.\label{eq:C11f}
\end{align}
We will consider nonlinear transformations of $\Rd$ defined as follows,
\begin{equation}\label{eq:T}
Tx=(\tx,x_d+\phi(\tx))\,,
\quad 
T^{-1}x=(\tx,x_d-\phi(\tx))\,.
\end{equation}
\begin{proof}[Proof of Lemma~\ref{l:loc}]
For $x=(x_1,\ldots,x_{d-1},x_d)\in \Rd$ we let $\tx=(x_1,\ldots,x_{d-1})$, so that $x=(\tx,x_d)$. The halfspace $H=\{x\in \Rd:\,x_d>0\}$ is a $C^{1,1}$ domain at each scale $r>0$, and we can localize it at $0$ by 
$$K = \{x \in \RR^d \colon0<x_d<r/2,\ |\tilde{x}| < r/4 +\sqrt{(r/4)^2-(x_d-r/4)^2} \}\,.$$ 
Put differently, $K$ is defined by the conditions: 
$|\tx|<r/2$ and 
$$r/4-\sqrt{(r/4)^2-(|\tx|-r/4)^2}<x_d<r/4+\sqrt{(r/4)^2-(|\tx|-r/4)^2}\,.$$
We see that $K$ is $C^{1,1}$ at scale $r/4$. We consider the inner and outer balls of radius $r/4$ for $K$, tangent at $Q\in \partial K$. 
Let $$B=\{x\in \Rd: 2(\nn,x-Q)> |x-Q|^2\}$$ be either one of them. Here $\nn\in \Rd$ and $|\nn|=r/4$.

Let $D$ be $C^{1,1}$ at a scale $r>0$. It is well known that up to isometry $D$ locally coincides with the image of the halfspace $H$ by a transformation $T$ of the form (\ref{eq:T}), see \cite[Section 2]{MR2286038}. Namely, by possibly changing coordinates, we may assume that $0\in \partial D$, $\phi$ satisfies (\ref{eq:C11}), $\phi(0)=0$, $\nabla\phi(0)=0$, 
$$
|\phi(\tilde{x})|\leq \frac{|\tilde{x}|^2}{r}<\frac{r}{4}\ ,\quad \mbox{provided} \quad  |\tilde{x}|<r/2\,,
$$
 and
$$
\{x\in D:\, |\tilde{x}|<r/2,\;|x_d|<r\,\}=\{x\in \Rd:\; |\tilde{x}|<r/2,\;\phi(\tilde{x})<x_d<r\}\,.
$$
We also have $\eta\leq c/r$ in (\ref{eq:C11}), where $c\geq 1$ is an absolute constant.

We define $F=TK\subset D$. We see that $F$ locally coincides with $D$,
and 
$F\subset \{x:\,|\tilde{x}|<r/2,\;-r/4<x_d<3r/4\}$, hence   ${\rm diam}(F)<2r$.
We claim that $F$ is $C^{1,1}$. 
The claim will follow from considering the image of $B$ by $T$.
Let $\xi=\nabla\phi(Q)$. We note that $|\xi|\leq \eta |\tQ|\leq \eta r/2\leq c/2$. 
We define
$$x\mapsto Sx=T(Q+x)-TQ=(\tx,x_d+[\phi(\tQ+\tx)-\phi(\tQ)])\ ,$$ $S^{-1}x=(\tx,x_d-[\phi(\tQ+\tx)-\phi(\tQ)])$, $
L x=(\tx,x_d+\xi\cdot \tx)$, $L^{-1}x=(\tx,x_d-\xi\cdot\tx)$.
We will use $L^{-1}$ as a linear approximation of $S^{-1}$ at $x=0$.
Let $L^{-1*}$ be the transpose of $L^{-1}$. 
We have $|Lx|\leq |x|+|\xi||x|\leq |x|(2+c)/2$. The same is true of $L^{-1}$ and $L^{-1*}$.
We note that 
$$
TB- TQ =S(B-Q) = \{x\in \Rd:\ 2(\nn,S^{-1}x)> |S^{-1}x|^2\}\,,
$$ 
and $x=0$ is on the boundary of the set. Consider the ball $B'=\{x:\, 2\kappa\, (L^{-1*}\nn,x)> |x|^2\}$. Note that $TQ \in  \partial(B'+TQ)$. We will verify our claim on $F$ by proving that if $2\kappa= c^{-1}\wedge (1+3c/4)^{-2}$, then $B' \subset S(B-Q)$, or $B'+TQ \subset TB$. 
To this end we note that 
\begin{equation}\label{eq:dBp}
2\kappa(L^{-1*}\nn,x)>|x|^2
\end{equation}
implies that $|x|< 2\kappa|L^{-1*}\nn|\leq r/2 $.
For such (small) $x$, by (\ref{eq:C11f}), we obtain
\begin{equation}\label{eq:T-1x}
|S^{-1}x|\leq |x|+|\xi||x|+\eta|x|^2/2\leq |x|(1+3c/4)\,.
\end{equation}
Similarly,
$$
|S^{-1}x-L^{-1} x|=|\phi(\tQ+\tx)-\phi(\tx)-\xi\cdot \tx|\leq \eta|\tx|^2/2\,.
$$
Now, (\ref{eq:dBp}) yields $2(\nn,L^{-1}x)>|x|^2/\kappa$, hence
\begin{align*}
2(\nn,S^{-1}x)&>
2(\nn,S^{-1}x)-2(\nn,L^{-1}x)+\frac{1}{\kappa}|x|^2
\geq \frac{1}{\kappa}|x|^2-2|\nn|\eta|\tx|^2/2\\
&\geq \frac{1}{2\kappa}|x|^2\geq (2\kappa)^{-1}(1+3c/4)^{-2}
|S^{-1}x|^2\geq |S^{-1}x|^2\,.
\end{align*}
The proof is complete.
\end{proof}

\begin{proof}[Proof of Lemma~\ref{l:wkp}]
By \cite{2009-TJ-KS-jee}
for $s>0$, $x\in \Rd$ and $\phi \in C_c^\infty\big((0,\infty)\times \Rd\big)$,
\begin{equation}\label{tp_fs}
\int_s^\infty \int_{\Rd} \tp(u-s,x,z) \left(\partial_u + \Delta_z^{\alpha/2}+b(z)\cdot \nabla_z\right)\phi(u,z) dz\,du = - \phi(s,x)\,.
\end{equation}
We note that the above integral is absolutely convergent. Indeed,
$$|\left(\partial_u + \Delta^{\alpha/2}_z + b(z) \cdot \nabla_z\right) \phi(u,z)| \le c(1+|b(z)|)\,,$$ and if $\phi(u,z) =0$ for $u > M$, then by (\ref{ptxy_comp}) and the remark following (\ref{eq:Kc}),
\begin{align*}
&\int_s^\infty \int_{\Rd} \tp(u-s,x,z) \left|\left(\partial_u + \Delta^{\alpha/2} + b(z) \cdot \nabla_z\right) \phi(u,z)\right| \,dz\,du \\
& \le  c_1\int_s^M \int_{\RR^d} p(u-s,x,z)(1+|b(z)|) \,dz\,du \\
& \le c_2 \int_\Rd (|x-z|^{\alpha-d}\wedge |x-z|^{-\alpha-d})(1+|b(z)|) \,dz<\infty\,.
\end{align*}
Furthermore, for $x\in \Rd$, $s\in \RR$ by (\ref{eq:Hunt}) and  (\ref{tp_fs}) we obtain
\begin{align}
&\int_s^\infty \int_{D} \tp_D(u-s,x,z) \left(\partial_u + \Delta_z^{\alpha/2} + b(z) \cdot \nabla_z\right) \phi(u,z) \,dz\,du\nonumber\\
& = \int_s^\infty \int_{\RR^d} \tp(u-s,x,z) \left(\partial_u + \Delta_z^{\alpha/2} + b(z) \cdot \nabla_z\right) \phi(u,z) \,dz\,du\nonumber\\
& - \tEE^x \int_{s+\tau_D}^\infty \int_{\RR^d} \tp(u-s-\tau_D,X_{\tau_D},z) \left(\partial_u + \Delta_z^{\alpha/2} + b(z) \cdot \nabla_z\right) \phi(u,z) \,dz\,du\nonumber\\
& = -\phi(s,x) + \tEE^x \phi(s+\tau_D, X_ {\tau_D}) \label{eq:pdec}\\
&= -\phi(s,x)\,.\nonumber
\end{align}
\end{proof}

\begin{proof}[Proof of Lemma~\ref{lem:lsgp}]
Let $s=0$, $x\in D$, $\phi \in C_c^\infty((0,\infty)\times \Rd)$ and assume that ${\rm supp}\, \phi \in (0,\infty)\times (\overline{D})^c$. 
By (\ref{eq:pdec}) and (\ref{eq;defulpn}) we obtain
$$
\tEE^x \phi(\tau_D,X_{\tau_D})=\int_0^\infty\int_D \int_{D^c}\tp_D(u,x,y)  \phi(u,z) \nu(z-y) dzdydu\,.
$$
\end{proof}

\begin{proof}[Proof of Lemma~\ref{lem:tgi}]
Let $\varphi \in C_c^\infty(D)$ and let $\chi(u) \in C_c^\infty(\RR)$ be such that  $\chi(u)=1$ for $u\in (-1,1)$. For $n=1,2, \ldots$ we define $\phi_n(u,z) = \varphi(x)\chi(u/n)$, and we have
$$
\left|\left(\partial_u + \Delta^{\alpha/2} + b(z) \cdot \nabla_z\right) \phi_n(u,z)\right| \le c(1+|b(z)|)
$$ 
with $c$ independent of $n$. By (\ref{tp_D_fs}), Lemma~\ref{lem:GLDupper} and dominated convergence, 
\begin{align*}
&-\varphi(x) = \lim_{n \to \infty} -\phi_n(0,x) \\
& =\lim_{n \to \infty} \int_0^\infty \int_{D} \tp_D(u,x,z) \left(\partial_u + \Delta_z^{\alpha/2} + b(z) \cdot \nabla_z\right) \phi_n(u,z) \,dz\,du\\
& = \int_s^\infty \int_{D} \tp_D(u,x,z) \left(\Delta^{\alpha/2}\varphi(z) + b(z) \cdot \nabla\varphi(z)\right) \,dz\,du\\
& = \int_{D} \tG_D(x,z) \left(\Delta^{\alpha/2}\varphi(z) + b(z) \cdot \nabla\varphi(z)\right)\,dz\,.
\end{align*}
\end{proof}

{\bf Acknowledgements.}  The results were presented at the conference
Nonlocal Operators and Partial Differential Equations, June 27-July 2, B\c{e}dlewo and The Sixth International Conference on L\'evy Processes: Theory and Applications, July 26-30, 2010, Dresden.
We thank the organizers for the invitation. In Dresden Professor Renming Song announced related sharp estimates of heat kernel $\tp_D$ for bounded $C^{1,1}$ open sets, analogous to \cite{CKS2008}.
{\bf Added in Proof.} The above mentioned results are available on arXiv (\cite{2010arXiv1011.3273C}). 
\bibliographystyle{abbrv} 
\bibliography{gradientpert}

\end{document}